\newtheorem{theorem}{Theorem}[section]
\newtheorem{lemma}[theorem]{Lemma}
\newtheorem{corollary}[theorem]{Corollary}
\theoremstyle{definition}
\newtheorem{definition}[theorem]{Definition}
\theoremstyle{remark}
\newtheorem{remark}[theorem]{Remark}
\numberwithin{equation}{section}
\newcommand{\comp}{B^{\complement}}
\newcommand{\dist}{\mathrm{dist}}
\newcommand{\spam}{\mathop{\mathrm{span}}}
\newcommand{\supp}[1]{{\mathrm{supp}}({#1})}
\newcommand{\ints}{\mathbb{Z}}
\newcommand{\reals}{\mathbb{R}}
\newcommand{\dif}{\mathrm{d}}
\renewcommand{\r}{\rho_0}
\newcommand{\Ain}{\mathcal{A}_{in}}
\newcommand{\Aout}{\mathcal{A}_{out}}
\newcommand{\B}{\mathcal{B}}
\newcommand{\J}{\mathcal{J}}
\renewcommand{\L}{\mathcal{L}}
\begin{document}

% \title[short text for running head]{full title}
\title{The Penalized Lebesgue Constant for Surface Spline Interpolation
\thanks{\emph{2000 Mathematics Subject Classification:} 41A05, 41A25, 46E35, 65D05}
%    Only \author and \address are required; other information is
%    optional.  Remove any unused author tags.
\thanks{\emph{Key words:}interpolation, surface splines, Lebesgue constant, radial basis function}}
%    author one information
% \author[short version for running head]{name for top of paper}
\author{Thomas Hangelbroek
\thanks{Department of Mathematics, Texas A\&M
    University College Station, TX 77843, USA. Research supported by an NSF Postdoctoral Research Fellowship.}}
%\curraddr{}
%\email{hangelbr@math.tamu.edu}
%    author two information

%    \subjclass is required.
%\subjclass[2000]{Primary }
%    The 2010 edition of the Mathematics Subject Classification is
%    now available.  If you are citing a classification from the
%    new scheme, use the following input coding instead.

%\date{}

%\dedicatory{}

%    "Communicated by" -- provide editor's name; required.
%\commby{}

%    Abstract is required.
\maketitle
\begin{abstract}
Problems involving approximation from scattered data where data is arranged quasi-uniformly have been treated by RBF methods for decades. Treating data with spatially varying density has not been investigated with the same intensity, and is far less well understood. In this article we consider the stability of surface spline interpolation (a popular type of RBF interpolation) for data with nonuniform arrangements. 
Using techniques similar to \cite{HNW}, which discusses interpolation from quasi-uniform data on manifolds, we show that surface spline interpolation on $\reals^d$ is stable, but in a stronger, local sense. We also obtain pointwise estimates showing that the Lagrange function decays very rapidly, and at a rate determined by the local spacing of datasites. These results in conjunction with a Lebesgue lemma, show that surface spline interpolation enjoys the same rates of convergence as those of the local approximation schemes recently developed by DeVore and Ron in \cite{DeRo}.
\end{abstract}

%    Text of article.

%    Bibliographies can be prepared with BibTeX using amsplain,
%    amsalpha, or (for "historical" overviews) natbib style.
%    Insert the bibliography data here.

\section{Introduction}
Since their introduction as tools for approximation and interpolation in the 1970s in works of Duchon \cite{D2,D1} and Meinguet \cite{Mein}, {\em surface splines} (known also as polyharmonic splines) have become one of the most prominent examples of {\em radial basis functions} (RBFs). The hallmark of the RBF methodology is to compute with
linear combinations of scattered translates of a basic function (the RBF): 
$$s = \sum_{\xi\in \Xi} A_{\xi} \phi(x-\xi),
%\footnote{One may supplement such a sum with a polynomial of low degree -- this is discussed later}
$$
where the linear combination is taken over a finite set of {\em centers} $\Xi$
(although often one supplements a polynomial of low degree to such a linear combination).
By choosing the RBF correctly, one can effectively treat high dimensional data by, e.g., interpolation, least squares minimization, or Tikhonov regularization to name a few popular %approximation 
{\em schemes}. The  surface splines  are defined by
$$\phi(x):=\phi_m(x) := \begin{cases} |x|^{2m-d},&\quad \text{when $d$ is odd};\\
             |x|^{2m-d} \log|x|,&\quad \text{when $d$ is even}.
            \end{cases}$$
In this article, we investigate error and stability of surface spline interpolation, 
two subjects which still harbor %bear? support?
 many open questions. 
% One of the more important questions has to do with ``boundary effects''. The error from approximation by RBFs is known to grow dramatically in the presence of a boundary - these boundary effects are gradually being understood.
%%%%%%%%%%%%%%%%%%%%%%%%%%%%%%%%%%%%%%%%

Of principal interest are questions concerning norms of the operators underlying the approximation scheme, especially when those operators are projectors
% which, as the operator norm of the interpolation operator, or some other approximation scheme based on projection 
(as is the case for interpolation and $L_2$ minimization), because such norms measure the stability of the approximation process. When the approximation scheme is interpolation -- the scheme investigated in this article --  the $L_{\infty}$ operator norm is called the {\em Lebesgue constant}.
Finding bounds for such norms based on the computational cost 
(measured as the dimension of the space of approximants) is a pervasive problem in approximation theory, despite the fact that there are few examples where such constants can be satisfactorily bounded. 
Univariate spline theory provides an important exception, with  operator norms 
being uniformly bounded; results along these lines hold for interpolation, $L_2$ minimization and many other schemes -- with a bound independent of the complexity of the problem (i.e., regardless of the number of knots), cf. \cite{deBoor, Demko, Shad}.
For polynomial interpolation, interpolation by trigonometric polynomials on the circle, or spherical harmonics on $d$-dimensional spheres, Lebesgue constants grow with the dimension of the space of interpolants, indicating that interpolation in these settings is unstable. 
We develop and investigate a ``penalized'' Lebesgue constant, a quantity that measures stability of the interpolation process under highly nonuniform distributions of centers.

The results we present in this article show that under some simple assumptions, surface spline interpolation of compactly supported functions is stable, but, more than this, it is 
stable in a very strong sense, one that takes into account the local distribution of centers. 

In the course of our analysis, we also show that the Lagrange (or fundamental) functions associated with 
this kind of interpolation are very well localized:  
they decay nearly exponentially in terms of the spatial variable, and this decay 
becomes dramatically more rapid in regions of high density -- the Lagrange function also scales linearly with the local spacing of the centers. 
This is a significant accomplishment in its own right, as it demonstrates that the space of RBFs has a basis that is fairly local. The importance of this stems from the fact that RBFs are globally supported  and have influence far from their center. Even considering RBFs with compact support does little to diminish this fact; because one does not dilate the RBF as the centers become dense, the RBF captures more and more centers in its support as the density increases. The Lagrange functions do, however, adapt to changing density of the centers.

Another result we present concerns the error incurred by interpolation. Recently, DeVore and Ron \cite{DeRo} have developed error estimates for surface spline approximation that are both {\em precise} (with convergence in $L_p$ correctly dictated by the $L_p$ smoothness of the function) and {\em local} (with pointwise convergence controlled by the local density).  In the larger context of approximation theory, precision is not so remarkable, but for kernel based approximation, precise results have been notoriously difficult to prove. With few exceptions, for RBF approximation such results are known only when error is measured in $L_2$, and only then for target functions residing in certain reproducing kernel Hilbert spaces. The estimates in \cite{DeRo} are local in the sense that the error is also controlled by the local density of the centers. Such results, which have long been known for univariate spline approximation, allow faster convergence in regions of high density.

The key arguments used in our results have been developed gradually, 
over a period of more than 30 years. 
At the heart of our approach is a discrete Gr{\" o}nwall's inequality employed by Descloux \cite{Des} and Douglas, Dupont and Wahlbin \cite{DDW}.
A similar inequality was used by Matveev in \cite{Mat1},
 to obtain results about interpolation by $D^m$splines, which are interpolating 
functions satisfying certain variational problems.
As in the RBF case, such interpolants can be expressed as a linear combination of kernels,
but the kernels depend strongly on the geometry of the underlying domain $\Omega$. 
Because of this, they are not generally well suited for direct computation. 
But when $\Omega = \reals^d$, this corresponds to the problem of surface spline interpolation on $\reals^d$, and provides an ideal starting point for our setting. 
%For surface spline interpolation, some of these ideas have been exploited by 
%Bejancu \cite{Bejancu} in studying gridded surface spline interpolation. 
Indeed, Matveev's work plays an important role in Johnson's  \cite{Johnson} resolution of
the $L_p$ saturation order for surface spline 
interpolation on bounded domains when $1\le p\le2$. 

Very recently, this approach has been used to bound Lebesgue constants for interpolation on Riemannian manifolds by Narcowich, Ward, and the author \cite{HNW}. 
In fact, it is shown in \cite{HNW} that when the manifold is $\reals^d$
and centers are assumed to be quasi-uniform,
the boundedness of the Lebesgue constant 
for surface spline and `Sobolev spline' interpolation follows rather 
directly from the results of Matveev \cite{Mat1}. 
Whereas the challenge in the manifold setting is to construct kernels suitable for the type of argument developed for $D^m$ splines, and then to formulate such an argument to work in different geometries,
the crucial difference in this article is to find such an argument that  treats centers with spatially varying density (as opposed to quasi-uniform centers whose density can be measured by a  single, spatially fixed parameter). %The challenge to adapt Matveev's argument in this setting is ultimately to match the nearly exponential decay  of the Lagrange function in a harmonious way with a condition on the variation of the centers.
\subsection*{Organization}
In the following section, we introduce the penalized Lebesgue constants and describe their role in the stability of the interpolation problem we consider. We also discuss the Lebesgue lemma,
one of the main motivations for this investigation. In Section 3, we discuss the main technical lemmas needed to provide sharp bounds for the Lagrange functions. Section 4 provides Sobolev and pointwise decay estimates of the Lagrange functions. The main results -- the bound on the Lebesgue constant and the use of the  Lebesgue lemma to obtain optimal local approximation rates for surface spline interpolation are given in Section 5.
% 
% Recently, DeMarchi and Schaback, by using variational properties of RBF interpolants 
% have shown thats fundamental functions are bounded under quasi-uniformity conditions,
% and have communicated including results of numerical experiments.. 
% Explain the challenge of interpolating near the boundary and the numerical results they got. 
\subsection*{Background}
In this paper, we treat the interpolation of functions on an open, bounded domain $\Omega\subset \reals^d$. We denote the diameter of $\Omega$ by $r_1:=\sup_{x,y\in \Omega} \dist(x,y).$ The functions we interpolate are compactly supported in $\Omega$, with  
$$r_0:= \dist(\supp f, \partial \Omega)>0.$$ 
The set of nodes for interpolation coincide with the set of centers $\Xi \subset \Omega$ (introduced previously), and this is assumed 
to be finite. Moreover,
the subset $\Xi_f = \Xi \cap \supp f$ is assumed to be nonempty. A consequence of this assumption is that the notorious `boundary effects' are avoided\footnote{The presence of a boundary poses an obstacle in nearly all computational problems; approximation with RBFs is no exception to this, although the negative effects of a boundary and how to overcome these is gradually being understood, cf. \cite{H1}. 
%There exist, moreover, numerous important boundary-free approximation problems, and understanding interpolation in this context has great value.
}.

%Added for revision
The space of polynomials of degree less than or equal to $n$ is denoted by $\Pi_{n}$. The set of centers $\Xi$
is said to be {\em unisolvent} with respect to $\Pi_n$ if the only $p\in\Pi_n$ that vanishes on $\Xi$ is the zero
polynomial.

We are now ready to define the surface spline interpolant.
Given a continuous function $f\in C(\reals^d)$, a set of centers $\Xi$ unisolvent with respect to $\Pi_{m-1}$, and a surface spline $\phi=\phi_m$
with $m>d/2$, the surface spline interpolant to $f$ is the function $I_{\Xi}f$ that satisfies $I_{\Xi}f(\xi) = f(\xi)$ for all $\xi\in\Xi$,
and that resides in the set
\begin{equation}\label{space}
S_{\Xi} := 
\left\{\sum_{\xi \in \Xi} A_{\xi} \phi(\cdot -\xi)+ p(\cdot)
\mid 
p\in \Pi_{m-1},\  
\sum_{\xi\in\Xi} A_{\xi}q(\xi) = 0, \ 
\forall q\in \Pi_{m-1}
\right\}.
\end{equation}
%$$I_{\Xi} f = \sum_{\xi\in\Xi} A_{\xi} \phi(\cdot-\xi) + p,\quad
%\text{with}\, p\in \Pi_{m-1}\, \text{and}\, \sum_{\xi\in\Xi}A_{\xi} q(\xi)=0\, \text{for all}\, q\in\Pi_{m-1}.$$
Existence and uniqueness of the surface spline interpolant is a consequence of the conditional positive
definiteness of the function $\phi_m$, which, in turn, is a consequence of the positivity of its generalized Fourier transform. 
See \cite[Chapter 8]{Wend} for the relevant background.

There is a dual, variational characterization of surface spline interpolation which states that the interpolant $I_{\Xi}f$ is the minimizer 
 of the $L_2$ Sobolev seminorm of order $m$ over all interpolants to $f$ at the set $\Xi$ (cf. \cite[Chapter 10]{Wend}). 
 In other words, $|I_{\Xi}f|_{W_2^m(\reals)} \le |u|_{W_2^m(\reals^d)}$ for all $u$ for which $u|_{\Xi}= f|_{\Xi}$.
 The Sobolev
 seminorm is defined as follows.  
\begin{definition}\label{Def:Sobolev}
For an open set $\Upsilon\subset \reals^d$, we define the $m^{\mathrm{th}}$ Sobolev seminorm:
\begin{equation}\label{def_ssn}
| f|_{W_2^m(\Upsilon)}:= \left( \sum_{|\beta| = m}  \frac{m!}{\beta!} \int_{\Upsilon}  |D^{\beta} f(x)|^2\, \dif x\right)^{1/2}.
\end{equation}
\end{definition}

The space of locally integrable functions having finite $m^{\mathrm{th}}$ seminorm is the homogeneous Sobolev space, denoted $\mathring{W}_2^m(\Upsilon)$. 
When $\Upsilon = \reals^d$, we suppress the dependence on $\Upsilon$, writing simply $\mathring{W}_2^m$, and express the seminorm as $|f|_m$. 
In the literature this is also frequently referred to as the Beppo-Levi space $BL^m$. 
When $m>d/2$, the homogeneous Sobolev space is embedded in the space of continuous functions, and is, therefore, a reproducing kernel semi-Hilbert space. On occasion we also use the (inhomogeneous) Sobolev norm 
$\| f\|_{W_2^m(\Upsilon)}:= 
(\sum_{k=0}^m | f|_{W_2^k(\Upsilon)}^2)^{1/2}$.
 
Throughout this article we also consider Sobolev seminorms when $\Upsilon$ is a  ball, the complement of a ball, or an  annulus. 
For $R>0$ we denote the open ball centered at $x$ having radius $R$ by $B(x,R)$. 
We denote its complement by $\comp(x,R) = \reals^d \backslash B(x,R)$.
For $R,w>0$, we define the annulus centered at $x$ having outer radius $R$ and width $w$ by
\begin{equation}\label{annulus}
A(x,R,w):= B(x,R) \backslash   B(x,R-w) = \comp(x,R-w) \backslash \comp(x,R)
\end{equation}
where equality is modulo sets of measure zero. 
When $x=0$, we will suppress the first argument in the above notations. 
%%%%%%%%%%%%%%%%%%%%%%%%%%%%%%%%%%%%%%%%%%%%%%
%Our Problem
%%%%%%%%%%%%%%%%%%%%%%%%%%%%%%%%%%%%%%%%%%5%%
\section{Penalized Lebesgue Constants and Surface Spline Interpolation}
A motivating goal is
%, for functions $f$ in $C(\Omega)$, 
to provide estimates of the form
\begin{equation}\label{norm}
\left\|\frac{I_{\Xi}f}{\rho^{\sigma}}\right\|_{\infty}
\le 
C_{\sigma}\left\|\frac{f}{\rho^{\sigma}}\right\|_{\infty},\qquad\text{for}\,f\in C(\Omega)
\end{equation}
where $\rho:\Omega\to (0,\infty)$ is a positive function measuring, roughly, the density
of the set of centers $\Xi$: $\rho(x)$  indicates the spacing of centers near to $x$. We are mostly interested in situations where the density of $\Xi$ is permitted to vary considerably.  This is done without the usual assumption of {\em quasi-uniformity},
$$h := \max_{x\in\Omega}\dist(x,\Xi) \le c_0q, \qquad \text{where}\; q:= \min_{\xi\in\Xi}\dist(\xi,\Xi\setminus\{\xi\})$$ an assumption in effect for most interpolation results; in fact, this is most interesting when the centers are deliberately in violation of the quasi-uniformity assumption. 
By allowing points to coalesce in such a way that the density of points varies spatially, we find that the notion of the global fill distance $h$ as an approximation parameter is simply too crude. At some locations we will have spacing of points much smaller than $h$.

On one hand, an estimate of the form (\ref{norm}) shows the stability of the interpolation process: 
setting $\sigma = 0$ shows that the interpolant is never much greater than the original function, while choosing $\sigma>0$ gives a refined notion of stability, putting greater
emphasis on regions where $\rho$ is small. 

On the other hand, this leads to a Lebesgue lemma that respects the local distribution of centers, one where we can interpolate the error from any other surface spline approximant:
$$
\left\|\frac{f-I_{\Xi}f}{\rho^{\sigma}}\right\|_{\infty}
= 
\left\|\frac{f-s-I_{\Xi}(f-s)}{\rho^{\sigma}}\right\|_{\infty}
\le 
(1+C_{\sigma})\inf_{s\in S_{\Xi}}\left\|\frac{f -s}{\rho^{\sigma}}\right\|_{\infty}.
$$
Here $S_{\Xi}$ is the finite dimensional space of surface spline approximants, defined in (\ref{space}).
This shows interpolation is ``near best'' in the sense of local approximation. 
I.e., that the best local approximation rates are attainable for interpolation as well. DeVore and Ron  \cite{DeRo} have recently investigated local surface spline approximation, providing a scheme $f\mapsto s_{f,\Xi}$ that delivers precise local approximation results of the form $|f(x) -s_{f,\Xi}(x)|\le C{\rho(x)^{\sigma}}\|f\|_{C^{\sigma}(\Omega)}$, for $0<\sigma \le 2m$.

We denote by $\chi_{\xi}$ the unique function in the homogeneous Sobolev space 
$\mathring{W}_2^m$
%:= \{f\in \mathcal{S}': |f |_{W_2^m(\reals^d)}<\infty\}$
%(sometimes referred to as a Beppo-Levi space and designated by $BL^m$) 
equaling $1$ at $\xi$, vanishing on $\Xi \backslash \{\xi\}$ and having minimal 
semi-norm $|\cdot |_{W_2^m(\reals^d)} $ over all functions sharing these properties. 
That is, $\chi_{\xi}$ is the Lagrange function for surface spline interpolation
centered at $\xi\in \Xi$. %, and it is well known that it is also the unique interpolant residing in $S_{\Xi}$
The Lagrange functions permit us to rewrite the  interpolant as
$I_{\Xi} f = \sum_{\xi \in \Xi} f(\xi) \chi_{\xi}$.

Our goal is to bound the operator norm of the interpolation operator on weighted 
$L_{\infty}$ spaces. 
By a a simple application of H{\"o}lder's inequality we observe that
$$
\left| \frac{I_{\Xi}f(x)}{\rho(x)^{s}} \right| 
=
\left| \frac{\sum_{\xi\in\Xi}f(\xi)\chi_{\xi}(x)}{\rho(x)^{s}} \right|
\le
\left(
   \sup_{\xi\in \Xi}
  \left|\frac{f(\xi)}{\rho(\xi)^{s}}\right| 
\right)
\left(
  \sum_{\xi\in\Xi} 
   \left|\chi_{\xi}(x)
  \left(\frac{\rho(\xi)}{\rho(x)}\right)^{s}\right|
\right).
$$
Assuming a `slow growth assumption' introduced in \cite{H} and defined in the following section, it suffices to bound
$
\L_{\sigma,\rho} 
:= 
\sup_{x\in \Omega}\sum_{\xi\in\Xi}  
|\chi_{\xi}(x)| 
\left(1+\frac{|x-\xi|}{\rho(x)}\right)^{\sigma}
$
%\left(\frac{\rho(\xi)}{\rho(x)}\right)^{\sigma},$$ 
for any nonnegative $\sigma = s (1-\epsilon)$.
For a target function $f$ having support compactly contained in $\Omega$ 
(as we assume throughout this article) 
the summation is over the subset $\Xi_f$, so we need only bound
%%%%%%%%%%%%%%%%%%%%%%%%%%%%%%%%%%%%%%%%
\begin{equation}\label{pen_leb_const_def}
\L_{\sigma,\rho} 
= 
\sup_{x\in \Omega}\sum_{\xi\in\Xi_f}  |\chi_{\xi}(x)| \left(1+\frac{|x-\xi|}{\rho(x)}\right)^{\sigma}.
\end{equation}
%%%%%%%%%%%%%%%%%%%%%%%%%%%%%%%%%%%%%%%%%
We note that, when $\sigma = 0$ (or when $\rho$ is constant), this is precisely the usual Lebesgue constant, and for this reason, we call $\L_{\sigma,\rho}$ the {\em penalized Lebesgue constant}.
We approach this by estimating the Lagrange functions associated with $\xi$ slightly removed from the boundary of $\Omega$. The result we obtain, under assumptions placed on the centers, detailed in the subsequent sections, (Theorem \ref{Theorem:Lagrange}) is
$$|\chi_{\xi} (x)| \le C  \left(1+\frac{|x-\xi|}{\rho(\xi)}\right)^\sigma\exp \left[-\lambda\left(\frac{|x-\xi|}{\rho(\xi)}\right)^\epsilon\right],$$ 
for all $x$ in the largest ball centered at $\xi$ that is contained in $\Omega$.

\section{Local Density Functions}
We begin by developing the notion of a local density (LD) parameter. 
Such parameters are key to the results of \cite{DeRo}, although
we follow the approach taken in \cite{H}.
Initially, we can think of the LD as a function on $\Omega$ which, at $\alpha\in\Omega$, is the minimal radius needed to capture a $K$-stable local polynomial reproduction of order $\ell$. 
\begin{definition}[Local Density Parameter] \label{Def:LocalDensity}
Given a set of centers $\Xi \subset \reals^d$, a \emph{local density parameter (LD)}  $\rho:\reals^d \to \reals_+$ is a function with an associated {\it local polynomial reproduction}
$a:(\xi,\alpha)\mapsto a(\xi,\alpha):\Xi\times \reals^d \to \reals$ of precision $\ell$:
\vspace{-.3ex}
\begin{description}
\item[\bf  (Support)]  For $|\xi - \alpha|>\rho(\alpha)$, $a(\xi,\alpha) = 0$.
\item[\bf (Precision)] For all $p\in \Pi_{\ell}$ we have $\sum_{\xi\in\Xi} a(\xi,\alpha)p(\xi) = p(\alpha)$.
\item[\bf (Stability)] There is $K>0$ such that $\sum_{\xi\in\Xi} |a(\xi,\alpha)|<K$ for all $\alpha$.
\end{description}
\end{definition}
This definition is not quite sufficient for our purposes, and we impose an extra condition of global compatibility
to ensure that the LD does not grow too rapidly.
This is the approach taken in \cite{H} and which is at the heart of DeVore and Ron's error estimates. 
\begin{definition}[Slow Growth]
We say $\rho$ exhibits $1-\epsilon$ {\em slow growth} if
\begin{equation}\label{sg}
\rho(\alpha) \le \rho(x)\left(1+\frac{|x-\alpha|}{\rho(x)}\right)^{1-\epsilon},
\end{equation}
for every $x$ and $\alpha$ in $\Omega$.
\end{definition}
The function $\rho:\Omega\to \reals_{+}$ exhibits {\em self-majorization} of order $\tau$ if there is a constant $C_{sm}>0$ so that for every $x$ and $y$ in $\Omega$, we have
\begin{eqnarray}
\label{sm}
\rho(y) \ge C_{sm} \rho(x)\left(1+\frac{|x-y|}{\rho(x)}\right)^{-\tau}.
\end{eqnarray}
In \cite{H} it has been shown that these conditions, (\ref{sg}) and (\ref{sm}), are actually equivalent assumptions, provided $\tau+1=1/\epsilon$. If
$\rho$ satisfies $1-\epsilon$ slow growth, then it satisfies self-majorization of order $\tau$, with constant $C_{sm}$ depending on $\epsilon$ only.
\subsection*{Norming Sets and the Zeros Lemma}
The norming set concept has become an essential tool 
in scattered data approximation since being introduced 
in this context in \cite{Jet}. 
In our setting, it allows us to transfer the $L_{\infty}$ norm of a polynomial
 on a domain (such as a ball) to the norm of its restriction to a  subset.
A set $\Upsilon$ is a norming set for a ball $B$ with {\em norming constant} $\kappa$
if for polynomials $p\in \Pi_{\ell}$ we have
\begin{equation}\label{norming}
\|p\|_{L_{\infty}(B)}\le \kappa \|p_{|_{\Upsilon}}\|_{\ell_{\infty}(\Upsilon)}.
\end{equation}
 A set of centers associated with a local density function provides precisely this:
%
%%%%%%%%%%%%%%%%%%%%%%%%%%%%%%%%%%%
%
\begin{lemma}\label{Lemma:Norming_Set}
Suppose $\Xi$ has a LD $\rho$ with precision $\ell$, stability $K$ that satisfies slow growth. Then $\Xi_x:=\Xi\cap B(x,3\rho(x))$ is a norming set for polynomials $p\in \Pi_{\ell}$ in the ball $B(x, 3\rho(x))$. 
\end{lemma}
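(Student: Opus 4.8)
The plan is to norm $p\in\Pi_{\ell}$ first on the \emph{smaller} concentric ball $B(x,\rho(x))$, where the local polynomial reproduction attached to $\rho$ does the work directly, and then to inflate the estimate to $B(x,3\rho(x))$ by a classical polynomial growth inequality whose constant depends only on $\ell$.

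First I would localize the reproduction $a(\cdot,\cdot)$ over $\overline{B(x,\rho(x))}$. For $\alpha$ in this ball the \textbf{Support} property forces $a(\xi,\alpha)\neq 0$ only when $|\xi-\alpha|\le\rho(\alpha)$, while slow growth (\ref{sg}) with $|x-\alpha|\le\rho(x)$ gives $\rho(\alpha)\le\rho(x)\bigl(1+|x-\alpha|/\rho(x)\bigr)^{1-\epsilon}\le 2^{1-\epsilon}\rho(x)<2\rho(x)$ (using $1-\epsilon<1$). Hence every $\xi$ in the support of $a(\cdot,\alpha)$ satisfies $|\xi-x|\le|\xi-\alpha|+|\alpha-x|<2\rho(x)+\rho(x)=3\rho(x)$, i.e.\ $\supp{a(\cdot,\alpha)}\subset\Xi\cap B(x,3\rho(x))=\Xi_x$. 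Now the \textbf{Precision} property gives $p(\alpha)=\sum_{\xi\in\Xi}a(\xi,\alpha)p(\xi)$ for $\alpha\in\overline{B(x,\rho(x))}$, so taking absolute values, using that only $\xi\in\Xi_x$ contribute, and invoking the \textbf{Stability} bound $\sum_{\xi}|a(\xi,\alpha)|<K$ yields $|p(\alpha)|\le K\,\|p_{|_{\Xi_x}}\|_{\ell_{\infty}(\Xi_x)}$; a supremum over $\alpha$ produces $\|p\|_{L_{\infty}(B(x,\rho(x)))}\le K\,\|p_{|_{\Xi_x}}\|_{\ell_{\infty}(\Xi_x)}$.

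Next I would pass from the small ball to $B(x,3\rho(x))$. Given $y\in B(x,3\rho(x))$, restrict $p$ to the line through $x$ in the direction of $y$: this is a univariate polynomial of degree at most $\ell$ whose supremum over the segment $\{x+tu:|t|\le\rho(x)\}\subset B(x,\rho(x))$ is at most $\|p\|_{L_{\infty}(B(x,\rho(x)))}$, while $y$ sits at parameter $|y-x|<3\rho(x)$. The extremal growth bound for algebraic polynomials, $|q(t)|\le|T_{\ell}(t)|\,\|q\|_{L_{\infty}[-1,1]}$ for $|t|\ge1$ (with $T_{\ell}$ the Chebyshev polynomial, which increases on $[1,\infty)$), then gives $|p(y)|\le T_{\ell}(3)\,\|p\|_{L_{\infty}(B(x,\rho(x)))}$. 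Combining the two displays proves the lemma with norming constant $\kappa=K\,T_{\ell}(3)$, depending only on $K$ and $\ell$, and in particular uniform in $x$.

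The step that needs care is the scale mismatch in the localization step: one cannot apply the reproduction on $B(x,3\rho(x))$ directly, since the reproduction of a point near the boundary of that ball may draw on centers lying outside it; contracting to $B(x,\rho(x))$, where slow growth pins $\rho(\alpha)<2\rho(x)$, is exactly what keeps all relevant centers inside $B(x,3\rho(x))$, and the factor $3$ in the statement is precisely what the inflation step then costs. Only the values of $\rho$ on $\overline{B(x,\rho(x))}$ enter, so the argument is local to $B(x,3\rho(x))$; any constant $>1$ in place of $3$ would work after adjusting the inner radius, since $(1+t)^{1-\epsilon}+t\to1$ as $t\to0^{+}$.
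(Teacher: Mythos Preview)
Your proof is correct, and its first step---localizing the reproduction to $\overline{B(x,\rho(x))}$ via slow growth so that all contributing centers lie in $\Xi_x$---is exactly the paper's argument. The paper in fact stops there, having established only
\[
\|p\|_{L_{\infty}(B(x,\rho(x)))}\le K\,\|p_{|_{\Xi_x}}\|_{\ell_{\infty}(\Xi_x)},
\]
which, despite the phrasing of the lemma, is all that is ever used: in the proof of the Zeros Lemma the norming inequality is invoked only on the inner ball $B_1$, with $\Upsilon\subset B_3$. Your Chebyshev inflation step is an additional (and correct) argument that upgrades the conclusion to the literal statement---norming on the full ball $B(x,3\rho(x))$---at the cost of the harmless extra factor $T_{\ell}(3)$ in $\kappa$. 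So your proof is strictly more complete than the paper's; the paper neither needs nor supplies that second step.
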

\begin{proof}
The maximum value of $\rho(y)$ for $y\in B(x,\rho(x))$ is bounded by $2\rho(x)$, so for every $y\in B(x,\rho(x))$ there
exists a local polynomial reproduction $a(y,\xi)$ of order $\ell$ having support inside $B(y,2\rho(x))\subset B(x,3\rho(x))$. Thus
$$|p(y)|\le \left|\sum_{\xi \in \Xi} a(\xi,y) p(\xi)\right| \le  \left(\sum_{\xi\in \Xi} \left|a(\xi,y)\right|\right)
\, \max_{\xi \in B(y,\rho(y))} |p(\xi)| \le K \|p_{|_{\Xi_x}}\|_{\ell(\Xi_x)}.$$
\end{proof}
%%%%%%%%%%%%%%%%%%%%%%%%%%%%%%%%%%%%
%
%
This brings us to the main result of this section: the zeros lemma, which states that the norm of a function in a Sobolev space, vanishing on a sufficiently large set, is bounded by a Sobolev seminorm of that function.
%
%%%%%%%%%%%%%%%%%%%%%%%%%%%%%%%%%%%%%%%%%%%
%
\begin{lemma}[Zeros] \label{Lemma:Zeros}
Suppose  $x \in\reals^d$ , $ m>d/2$,and $r>0$. If $\Upsilon\subset B(x,3r)$ 
is a norming set for $p\in\Pi_{m-1}$ with norming constant $\kappa$,
then there is a constant $C$ depending on $m, \kappa$ and $d$ such that if 
$f\in W_2^{m}(B(x,3r))$ 
vanishes on $\Upsilon$, then
$$
\sum_{k\le m}
r^{2(k-m)} |f|_{W_2^k\bigl(B(x,r)\bigr)}^2 
\le 
C |f|_{W_2^m\bigl(B(x,3r)\bigr)}^2.
$$
\end{lemma}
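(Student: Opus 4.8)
The plan is to use a polynomial reproduction / Taylor-polynomial trick combined with scaling to a unit ball. First I would reduce to the case $r=1$ and $x=0$ by the change of variables $y \mapsto x + ry$: under $f(y) \mapsto g(z) := f(x+rz)$ one has $|g|_{W_2^k(B(0,R))}^2 = r^{2k-d}|f|_{W_2^k(B(x,rR))}^2$, so the factors $r^{2(k-m)}$ in the statement are exactly what is needed to make the claimed inequality scale-invariant; thus it suffices to prove $\sum_{k\le m}|g|_{W_2^k(B(0,1))}^2 \le C|g|_{W_2^m(B(0,3))}^2$ for $g\in W_2^m(B(0,3))$ vanishing on a norming set $\Upsilon\subset B(0,3)$ (with the norming constant now applying on $B(0,3)$; note the norming property (\ref{norming}) is itself scale-invariant in the appropriate sense). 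Since $k\le m$, it is enough to bound $\|g\|_{W_2^m(B(0,1))}$, i.e. to add control of the lower-order terms $|g|_{W_2^k}$ for $k<m$ to the already-present $|g|_{W_2^m}$.

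The heart of the argument is a Bramble--Hilbert / Poincaré-type estimate. Pick a polynomial $P\in\Pi_{m-1}$ — for instance an averaged Taylor polynomial of $g$ on $B(0,3)$ — such that $\|g-P\|_{W_2^m(B(0,3))}\le C|g|_{W_2^m(B(0,3))}$ (this is the standard Bramble--Hilbert lemma on the fixed domain $B(0,3)$, a Sobolev space with $m>d/2$ so point evaluation makes sense). Now I estimate $P$ itself. Since $g$ vanishes on $\Upsilon$, for each $\xi\in\Upsilon$ we have $|P(\xi)| = |P(\xi)-g(\xi)| \le \|g-P\|_{L_\infty(B(0,3))} \le C_{\mathrm{Sob}}\|g-P\|_{W_2^m(B(0,3))}$ by the Sobolev embedding $W_2^m\hookrightarrow C$ (valid because $m>d/2$). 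Applying the norming-set inequality (\ref{norming}) to $P\in\Pi_{m-1}$ gives $\|P\|_{L_\infty(B(0,3))}\le \kappa\sup_{\xi\in\Upsilon}|P(\xi)| \le \kappa C_{\mathrm{Sob}}\|g-P\|_{W_2^m(B(0,3))} \le C|g|_{W_2^m(B(0,3))}$. Then, because $P$ is a polynomial of degree $<m$ on the fixed bounded set $B(0,3)$, all of its Sobolev norms are comparable to its sup norm by norm-equivalence on the finite-dimensional space $\Pi_{m-1}$: $\|P\|_{W_2^m(B(0,1))} \le \|P\|_{W_2^m(B(0,3))} \le C\|P\|_{L_\infty(B(0,3))} \le C|g|_{W_2^m(B(0,3))}$. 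Finally, by the triangle inequality $\|g\|_{W_2^m(B(0,1))} \le \|g-P\|_{W_2^m(B(0,1))} + \|P\|_{W_2^m(B(0,1))} \le \|g-P\|_{W_2^m(B(0,3))} + \|P\|_{W_2^m(B(0,3))} \le C|g|_{W_2^m(B(0,3))}$, which gives the desired bound on $\sum_{k\le m}|g|_{W_2^k(B(0,1))}^2$; undoing the scaling recovers the stated inequality with a constant depending only on $m$, $\kappa$, and $d$.

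The main obstacle, and the place where care is needed, is the interaction between the norming constant and the scaling. The norming-set property as stated in (\ref{norming}) compares $\|p\|_{L_\infty(B)}$ to $\|p\|_{\ell_\infty(\Upsilon)}$ for a ball $B$ containing $\Upsilon\subset B(x,3r)$; I would want the norming inequality to hold on $B(x,3r)$ itself (the hypothesis says $\Upsilon\subset B(x,3r)$ is a norming set for $\Pi_{m-1}$ with constant $\kappa$, so this is exactly available) and then observe that $\kappa$ is dimensionless — rescaling a ball does not change the best norming constant of a polynomial space — so no hidden $r$-dependence creeps in. The only other point to watch is that the Bramble--Hilbert constant and the Sobolev embedding constant are taken on the fixed domain $B(0,3)$ after rescaling, hence depend only on $m$ and $d$; this is why the whole scheme must be carried out on a normalized ball rather than directly on $B(x,3r)$. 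Everything else — the triangle inequalities, the equivalence of norms on $\Pi_{m-1}$, reassembling the $\sum_{k\le m}$ — is routine.
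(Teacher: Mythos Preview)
Your proof is correct and follows essentially the same approach as the paper: reduce to the unit ball by scaling, apply Bramble--Hilbert to obtain a polynomial $P\in\Pi_{m-1}$ with $\|f-P\|_{W_2^m(B_3)}\le C|f|_{W_2^m(B_3)}$, use the norming-set hypothesis together with $f|_\Upsilon=0$ and Sobolev embedding to bound $P$, and finish by the triangle inequality. The only cosmetic difference is the order in which you apply the norming inequality and the Sobolev embedding (and that the paper applies the norming bound on $B_1$ rather than $B_3$, which is immaterial up to a constant depending on $m,d$).
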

\begin{proof}
We begin by assuming $r=1$, $x=0$, and writing $B_1 = B(0,1)$ and $B_3=B(0,3)$. 
%For any $f\in W_2^m(B_1)$, there is  $p$ in $\Pi_{m-1}$ such that 
%$\|f-p\|_{W_2^m(B_1)}\le C |f|_{W_2^m(B_1)}$ 
For any $f\in W_2^m(B_3)$, 
there is a polynomial $p$ of degree at most $m-1$ so that 
$\|f-p\|_{W_2^m(B_3)}\le C |f|_{W_2^m(B_3)}$ 
(this is sometimes called the Bramble-Hilbert Lemma). 
%Likewise, for any $f\in W_2^m(B_3)$, 
%there is a polynomial $p$ of degree at most $m-1$ so that 
%$\|f-p\|_{W_2^m(B_3)}\le C |f|_{W_2^m(B_3)}$ 
%(with a different constant $C$). 
We can estimate $\|p\|_{W_2^m(B_1)}$ by
its $L_{\infty}(B_1)$ norm, with a constant depending only on $d$ and $m$, and, consequently by the $\ell_{\infty}$ norm incurring a constant $\kappa$,
$$
\|p\|_{W_2^m(B_1)}
\le 
C \|p\|_{L_{\infty}(B_1)} 
\le 
C \kappa \|p_{|_{\Upsilon}}\|_{\ell_{\infty}(\Upsilon)}.
$$
By our assumption, $f$ vanishes on $\Upsilon$, so there is no cost in subtracting its restriction from $p_{|_{\Upsilon}}$. 
Thus, we get
\begin{eqnarray*}
\|p\|_{W_2^m(B_1)} 
\le 
C \|(p-f)_{|_{\Upsilon}}\|_{\ell_{\infty}(\Upsilon)}
\le 
C \|p-f\|_{L_{\infty}(B_3)} 
\le 
C \|p-f\|_{W_2^m(B_3)} .
\end{eqnarray*}
The last inequality is an application of the Sobolev embedding theorem. It follows that
$$\|f\|_{W_2^m(B_1)} \le \|f-p\|_{W_2^m(B_1)} +\|p\|_{W_2^m(B_1)} \le (1+C)|f|_{W_2^m(B_3)}.$$
The result for general $r$ and $x$ holds by applying an affine change of variable, $y \mapsto x+ry$.
\end{proof}

\begin{remark}\label{rmrk}
Property (\ref{norming}) can be modified to treat slightly smaller subsets -- a fact we use later. 
In particular, if (\ref{norming}) holds for $\Pi_{\ell}$, 
then a similar result holds for polynomials of lower degree
by removing one point from $\Upsilon$. Let $\Upsilon' =  \Upsilon \setminus \{\xi\}$, and consider
$p\in\Pi_{\ell -1}$. We wish to show that 
\begin{equation*}
\|p\|_{L_{\infty}(B)}\le 
\kappa C_{\ell} 
\left\|
  p_{|_{(\Upsilon' )}}
\right\|_{\ell_{\infty}(\Upsilon')}.
\end{equation*}
Without loss of generality, we may consider $B = B(0,1)$ and $\Upsilon \subset B(0,3)$, 
since the result for general radii follows by rescaling.

%If $\|p\|_{L_{\infty}(B)} = |p(z)|$ for $z \in B$, it is possible to find 
%$y$ so that $|z-y|\le 1/(2(\ell-1))$ and $|\xi - y|\ge 1/(2(\ell-1))$.
%By employing Markov's inequality for polynomials, 
%which states that for a univariate polynomial $q$ of degree $m$, 
%$$|q'(t)|\le \frac{2}{R} m^2 \|q\|_{L_{\infty}(0,R)}$$ holds,
% we see that $|p(z)- p(y)| = |\int_0^1 \frac{d}{dt} p( t z +(1-t)y) \dif t  \le \frac{1}{2(\ell-1)^2} (\ell-1)^2 \|p\|.$ Consequently, 
%$|p(y)|\ge \frac{1}{2} \|p\|$.
There is a constant $c_{\ell}$ depending only on $\ell$ (but independent of $\xi$) so that 
$\|p\|_{L_{\infty}(B)}\le c_{\ell} \|p\|_{L_{\infty}(\Omega)}$ where $\Omega = B\setminus B(\xi,1/2). $
By compactness, we can choose $y\in \mathrm{cl}(B)$ so that 
$|y-\xi|>1/2$ and $c_{\ell}|p(y)|>\|p\|_{B}$. 
Consider $P \in\Pi_{\ell}$, where 
$P(z) := \langle(z-\xi),\frac{y-\xi}{|y-\xi|}\rangle p(z)$.
Then 
$$
\|p\|_{L_{\infty}(B)} 
%\le 2 |p(y)| 
\le 
2 c_{\ell}  |y-\xi| \, \bigl|p(y)\bigr|
= 2c_{\ell}  |P(y)|  
\le
2  c_{\ell} \kappa \left\|P|_{\Upsilon}\right\|_{\ell_{\infty}(\Upsilon)} .$$
The last inequality follows by applying (\ref{norming}) to $P$.

Observe that $P(\xi) =0$, so 
$ \left\|P|_{\Upsilon}\right\|_{\ell_{\infty}(\Upsilon)} =  \left\|P|_{\Upsilon'}\right\|_{\ell_{\infty}(\Upsilon')} $. 
Since $|\xi - \zeta|\le 6$ for every $\zeta\in \Upsilon'$, it follows that
$\|p\|_{L_{\infty}(B)}\le 2c_{\ell} \kappa \|P\|_{\ell_{\infty}(\Upsilon')} \le 
12  c_{\ell} \kappa \|p\|_{\ell_{\infty}(\Upsilon)}$.
\end{remark}
%
%%%%%%%%%%%%%%%%%%%%%%%%%%%%%%%%%%%%%%%%%%%
%
%
%%%%%%%%%%%%%%%%%%%%%%%%%%%%%%%%%%%%%%%%%%%
%
\section{Lagrange function estimates}
% 
%
%%%%%%%%%%%%%%%%%%%%%%%%%%%%%%%%%%%%%%%%%%% 
To show that the Lagrange function is rapidly decaying, 
we make use of a type of discrete Gr{\" o}nwall lemma. 
The key estimate is to show that the bulk of the tail of an exponentially decaying function can be captured over a  sufficiently wide annulus. For an increasing function $R:\reals_+\to\reals_+$ we express this as:
$$|\chi_{\xi}|_{W_2^m\bigl(  B(\xi,R(t)) \backslash B(\xi,R(t-1))\bigr)} \ge 
(1-\mu) |\chi_{\xi}|_{W_2^m\bigl(\reals^d \backslash B(\xi,R(t-1))\bigr)}.$$
 for $0<\mu<1$ independent of $t$. This implies the complementary inequality,
$|\chi_{\xi}|_{W_2^m\bigl( \reals^d\backslash B(\xi, R(t))\bigr)} \le 
\mu |\chi_{\xi}|_{W_2^m\bigl(\reals^d \backslash B(\xi,R(t-1))\bigr)},$
which can be iterated $\lceil t-1\rceil$ times to obtain
$$|\chi_{\xi}|_{W_2^m\bigl( \reals^d\backslash B(\xi, R(t))\bigr)} \le 
C\mu^t |\chi_{\xi}|_{W_2^m(\reals^d)},$$
This is the type of argument made in Lemma \ref{Lemma:Bulk} and Lemma \ref{Lemma:Tail}. 
%
%%%%%%%%%%%%%%%%%%%%%%%%%%%%%%%%%%%%%%%%%%%%%%%%%
\begin{lemma}\label{Lemma:Bulk}
Assume that $\Xi\subset \Omega$ has an LD $\rho$, with precision $m-1$ satisfying $1-\epsilon$ slow growth. 
Suppose, furthermore, that $\dist(\xi,\partial \Omega)<r_0$.  
Then there exists a constant $\mu\in (0,1)$ 
(depending on $\epsilon, m, d$ and the stability constant $K$ of $\rho$) 
so that for sufficiently small $\rho(\xi)$, 
%(i.e., less than a constant $\gamma$ depending only on $\epsilon$ and $r_0$) 
%%Added for revision
i.e., for $\rho(\xi)\le \gamma:= (\epsilon r_0^{\mbox{}-\epsilon})^{\frac{1}{1-\epsilon}}$,
and
$3<t<\left(r_0/\rho(\xi)\right)^\epsilon$,
$$|\chi_{\xi}|_{W_2^m\bigl(\reals^d \backslash B\bigl(\xi,\rho(\xi)t^{1/\epsilon}\bigr)\bigr)} \le \mu |\chi_{\xi}|_{W_2^m\bigl(\reals^d \backslash B\bigl(\xi,\rho(\xi)[t-3]^{1/\epsilon}\bigr)\bigr)}.$$
\end{lemma}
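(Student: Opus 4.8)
The plan is to combine the zeros lemma with the variational (minimal-seminorm) property of the Lagrange function $\chi_\xi$, using a carefully constructed cutoff to make a competitor interpolant that agrees with $\chi_\xi$ on $\Xi$. First I would set up the geometry: write $r = \rho(\xi)$, and for the annular parameter $t$ introduce the radii $R_0 = r[t-3]^{1/\epsilon}$, and intermediate radii $R_1 < R_2 < R_3 = r t^{1/\epsilon}$ chosen so that the gaps $R_{i+1}-R_i$ are comparable to $r[t]^{(1-\epsilon)/\epsilon}$ (the local spacing scale at distance $\sim rt^{1/\epsilon}$ from $\xi$, which by slow growth controls $\rho$ out there). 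The condition $t < (r_0/r)^\epsilon$ guarantees $R_3 < r_0$, so the whole annulus stays inside $\Omega$ and avoids the boundary; the smallness condition $r \le \gamma$ is what makes the "three unit steps" $[t-3]^{1/\epsilon}$ land in the regime where $\rho$ can be controlled on these annuli by slow growth (and self-majorization, \eqref{sm}).

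The core step is a localized Bernstein/inverse-type estimate driven by the zeros lemma (Lemma~\ref{Lemma:Zeros}). On each ball $B(y, 3\rho(y))$ with $y$ in the annulus $A(\xi, R_2, R_2-R_1)$, the set $\Xi_y = \Xi \cap B(y,3\rho(y))$ is a norming set for $\Pi_{m-1}$ by Lemma~\ref{Lemma:Norming_Set}, and $\chi_\xi$ vanishes on $\Xi_y$ (since $\xi$ is far away, outside this small ball — here is where we need $t>3$ and the slow-growth-controlled bound on $\rho(y)$ to ensure $\xi \notin B(y,3\rho(y))$). Hence Lemma~\ref{Lemma:Zeros} gives, after summing/integrating a Besicovitch-type finite overlapping cover of the annulus by such balls,
\[
\sum_{k\le m} \int_{A(\xi,R_2,R_2-R_1)} \rho(x)^{2(k-m)}|D^k\chi_\xi(x)|^2\,\dif x
\;\le\; C\,|\chi_\xi|_{W_2^m(\reals^d\setminus B(\xi,R_0))}^2,
\]
the right side being the seminorm over the slightly enlarged region (the factor-3 dilation of each small ball keeps us outside $B(\xi,R_0)$, using the choice of gaps). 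This is the quantitative statement that the low-order derivatives of $\chi_\xi$ on the annulus are dominated by its top-order seminorm on a somewhat larger set.

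Next I would build the competitor: let $\eta$ be a smooth cutoff equal to $1$ on $\comp(\xi,R_3)$ and $0$ on $B(\xi,R_1)$, with $|D^j\eta| \lesssim (R_2-R_1)^{-j} \lesssim (\rho(x)^{-j}$ on the transition annulus by the gap choice). Then $g := \chi_\xi - \eta\chi_\xi = (1-\eta)\chi_\xi$ vanishes on all of $\Xi$ (it agrees with $\chi_\xi$ near $\xi$ where $\eta=0$, and $\chi_\xi$ already vanishes on $\Xi\setminus\{\xi\}$, while on $\comp(\xi,R_3)$ we have $g=0$), so $u := \chi_\xi - g = \eta\chi_\xi$ is an admissible interpolant of the same data as $\chi_\xi$... wait, more directly: $\eta\chi_\xi$ interpolates the same values as $\chi_\xi$ on $\Xi$, hence by minimality $|\chi_\xi|_m \le |\eta\chi_\xi|_m$ — but that runs the wrong way. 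Instead apply minimality to the function that is $\chi_\xi$ inside and smoothly truncated: precisely, $v := \chi_\xi - \eta\chi_\xi$ also interpolates the data (it equals $\chi_\xi$ on $\Xi$), and the useful consequence is that on $\comp(\xi,R_3)$, $v\equiv 0$, so
\[
|\chi_\xi|_{W_2^m(\comp(\xi,R_3))}^2 \;=\; |\chi_\xi - v|_{W_2^m(\comp(\xi,R_3))}^2 \;\le\; |\chi_\xi - v|_m^2 - |\chi_\xi-v|_{W_2^m(B(\xi,R_3))}^2,
\]
and then comparing $|\chi_\xi - v|_m$ to $|\chi_\xi|_m$ via minimality and expanding $v=\eta\chi_\xi$ by the Leibniz rule, every term in $|\eta\chi_\xi|_{W_2^m(A(\xi,R_2,R_2-R_1))}^2$ is a product of derivatives of $\eta$ (costing powers of $\rho^{-1}$) and derivatives of $\chi_\xi$ of order $\le m$ — exactly the quantity bounded in the previous paragraph by $C|\chi_\xi|_{W_2^m(\comp(\xi,R_0))}^2$. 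Rearranging and absorbing yields $|\chi_\xi|_{W_2^m(\comp(\xi,R_3))}^2 \le C'\,|\chi_\xi|_{W_2^m(\comp(\xi,R_0))}^2 - |\chi_\xi|_{W_2^m(\comp(\xi,R_0))}^2$... the bookkeeping must be arranged so that one gets $|\chi_\xi|_{W_2^m(\comp(\xi,R_3))} \le \frac{C'}{C'+1}|\chi_\xi|_{W_2^m(\comp(\xi,R_0))}$, i.e. $\mu = C'/(C'+1) \in (0,1)$ with $C'$ depending only on $\epsilon,m,d,K$.

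The main obstacle I anticipate is the uniformity of $\mu$ in $t$: the constant $C'$ must not degrade as $t\to (r_0/r)^\epsilon$. This forces all the geometric comparisons — that $\rho(y)$ on the annulus $A(\xi,R_2,R_2-R_1)$ is comparable to $r t^{(1-\epsilon)/\epsilon}$ up to constants depending only on $\epsilon$, that the cutoff gradient bound $|D^j\eta| \lesssim \rho^{-j}$ holds pointwise on the transition region, and that the overlap number of the Besicovitch cover is bounded — to be done purely via the slow-growth inequality \eqref{sg} and its self-majorization consequence \eqref{sm}, whose constants depend on $\epsilon$ alone. Verifying that the exponent substitution $t \mapsto [t-3]^{1/\epsilon}$ (rather than the naive $t-1$ in the heuristic) is exactly what absorbs the variable local density into a clean iteration is the delicate point; the "three steps" are spent (i) widening from $R_0$ to accommodate the factor-3 dilation in the zeros lemma, (ii) the transition width of $\eta$, and (iii) slack for the slow-growth comparison. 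The rest is routine product-rule bookkeeping and a covering argument.
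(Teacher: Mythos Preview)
Your approach is essentially the paper's: build a cutoff that equals $1$ near $\xi$ and vanishes outside a ball of radius roughly $\rho(\xi)(t-1)^{1/\epsilon}$, invoke minimality of $\chi_\xi$ against the truncated competitor, then bound the annular seminorm of the competitor via Leibniz and the zeros lemma on a finite-overlap cover by balls of radius comparable to the local density. The ingredients, the role of slow growth in sizing the cover, and the final $\mu = \sqrt{C/(1+C)}$-type conclusion all match.

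However, your algebraic extraction of $\mu$ is garbled. The displayed line
\[
|\chi_\xi|_{W_2^m(\comp(\xi,R_3))}^2 \le |\chi_\xi - v|_m^2 - |\chi_\xi-v|_{W_2^m(B(\xi,R_3))}^2
\]
is a trivial equality (both sides equal $|\eta\chi_\xi|_{W_2^m(\comp(\xi,R_3))}^2 = |\chi_\xi|_{W_2^m(\comp(\xi,R_3))}^2$), and ``comparing $|\chi_\xi - v|_m$ to $|\chi_\xi|_m$ via minimality'' is not what minimality delivers. The clean chain, with $v=(1-\eta)\chi_\xi$, is: minimality gives
\[
|\chi_\xi|_m^2 \le |v|_m^2 = |\chi_\xi|_{W_2^m(B(\xi,R_1))}^2 + |v|_{W_2^m(\Ain)}^2,
\]
since $v=\chi_\xi$ on $B(\xi,R_1)$ and $v=0$ on $\comp(\xi,R_3)$; subtracting yields $|\chi_\xi|_{W_2^m(\comp(\xi,R_1))}^2 \le |v|_{W_2^m(\Ain)}^2$. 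Then Leibniz plus the zeros lemma on the cover give $|v|_{W_2^m(\Ain)}^2 \le C\,|\chi_\xi|_{W_2^m(\Aout)}^2$, and because the enlarged annulus $\Aout$ sits inside $\comp(\xi,R_0)\setminus\comp(\xi,R_3)$ one has
\[
|\chi_\xi|_{W_2^m(\comp(\xi,R_3))}^2 \le |\chi_\xi|_{W_2^m(\comp(\xi,R_1))}^2 \le C\bigl(|\chi_\xi|_{W_2^m(\comp(\xi,R_0))}^2 - |\chi_\xi|_{W_2^m(\comp(\xi,R_3))}^2\bigr),
\]
from which $\mu^2 = C/(1+C)$ follows. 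This is exactly the paper's chain; you had all the pieces but the subtraction step was in the wrong place.
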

%%%%%%%%%%%%%%%%%%%%%%%%%%%%%%%%%%%%%%%%%%%%%%%%%%
\begin{proof} 
Without loss of generality, we assume $\xi = 0$ and write $\r:=\rho(\xi)$ and $\chi:=\chi_{\xi}$.
With $1+\tau =1/\epsilon$, construct a $C^{\infty}$ cutoff $\phi$, vanishing outside 
$B((t-1)^{1+\tau}\r)$ 
and equaling $1$ on 
$B((t-2)^{1+\tau}\r)$. 
That is, $\phi$ is piecewise constant outside of the annulus
$$
\Ain 
:= 
A\left((t-1)^{1+\tau}\r, \tilde{w}(t)\right) 
$$ 
which has outer radius $(t-1)^{1+\tau}\r$ and
width $\tilde{w} := \bigl[(t-1)^{1+\tau}-(t-2)^{1+\tau}\bigr] \r $. 
We note that $(\tau+1)(t-2)^{\tau}\r\le \tilde{w}(t)\le(\tau+1)(t-1)^{\tau}\r\le1$, for 
$\r$  less than  $\gamma$.
It follows from the chain rule that 
\begin{equation}\label{chainrule}
\|D^{\alpha} \phi\|_{\infty} \le C(\alpha) (\tilde{w})^{-|\alpha|}.  
\end{equation}
By the variational property of $\chi$, we have that
$$
|\chi|_{W_2^m(\reals^d)}^2
\le 
  | \phi \chi|_{W_2^m(\reals^d)}^2
\le  
  |\chi|_{W_2^m\left(\B \right)}^2 
  + 
  |\phi \chi|_{W_2^m(\Ain)}^2,
$$
where we express the ball interior to $\Ain$ by the compact notation:
$$\B: =  B\left(\r(t-2)^{1+\tau}\right).$$
Our goal for the remainder of the proof is to estimate the quantity $|\phi \chi|_{W_2^m(\Ain)}^2.$
This takes the form of two estimates. 
The first estimate is concerned with the effect of multiplying
by the cutoff, which can be easily measured using Leibniz's rule in conjunction with (\ref{chainrule}). 
This is independent of the radius $t$, and allows
us to control the seminorm in terms of a biased Sobolev norm of $\chi$. The second estimate will control
this biased Sobolev norm via partitioning the annulus into small balls where Lemma \ref{Lemma:Zeros} can
be applied.

{\bf Estimate 1} The desired result is to control 
$|\phi \chi|_{W_2^m\left(\Ain \right)}^2$
by a weighted combination of Sobolev seminorms of $\chi$ on $\Ain.$ 
We observe first of all that we can simplify matters
by applying the product rule:
\begin{eqnarray*}
 |\phi \chi|_{W_2^m(\Ain)}^2 
&=& 
\sum_{|\alpha|= m} 
\int_{\Ain} 
  \frac{m!}{\alpha!} 
  \left| 
    \sum_{\beta\le \alpha} 
      {\alpha \choose \beta} 
      D^{\beta} \chi(x) D^{\alpha-\beta}\phi(x)
  \right|^2 
\dif x\nonumber\\
&\le& 
C_1 
\sum_{|\alpha|\le m} \sum_{\beta\le \alpha}
  \int_{\Ain} 
    \left| 
      D^{\beta}\chi(x) D^{\alpha-\beta}\phi(x) 
    \right|^2 
  \dif x. \nonumber
\end{eqnarray*}
The constant $C_1 = C_1(m,d)$ is seen to depend only on $m$ and $d$.
We continue with the estimate, now observing that we can remove the factors 
$D^{\alpha - \beta}\phi(x)$ by (\ref{chainrule}).
Thus, 
\begin{eqnarray}\label{weightednorm}
|\phi \chi|_{W_2^m(\Ain)}^2
&\le& 
C_2  \sum_{|\alpha|\le m} 
     \sum_{\beta\le \alpha}
     \int_{\Ain}
        \bigl(  \tilde{w} \bigr)^{2(|\beta|-|\alpha|)}
        \left|  D^{\beta} \chi (x) \right|^2
     \dif x
\nonumber\\
&\le& 
C_3  \sum_{k\le m}   
       \bigl(\tilde{w}\bigr)^{2(k-m)}
       | \chi |_{W_2^k(\Ain)}^2. 
\end{eqnarray}
 The first inequality follows by applying (\ref{chainrule}) and the observation
 that any two multi-indices $\beta<\alpha$ satisfy $|\alpha-\beta| = |\alpha|-|\beta|$. 
The second inequality is obtained by rearranging terms.

{\bf Estimate 2} We are nearly in a position to apply Lemma \ref{Lemma:Zeros}. Cover $\Ain$ with a sequence of balls $(B_j)_{j\in \J}$ such that
\begin{itemize}
\item each ball is of radius $3\r t^{\tau}$,
 \item each ball has its center in $\Ain$ and, thus, $B_j\subset \Aout$
 where
 $$ 
\Aout:=
B\bigl(\r\bigl[(t-1)^{1+\tau} + 3t^{\tau}\bigr]\bigr)
\setminus 
B\bigl(\r\bigl[(t-2)^{1+\tau} - 3t^{\tau}\bigr]\bigr),
$$
\item the balls have a finite intersection property: every $x\in \Aout$ is in at most $N_d$ balls $B_j$, with $N_d$ depending
only on the spatial dimension and not on $t,\r, \tau$, etc. 
\end{itemize}
Writing the center of each ball $B_j$ as $c_j$, 
we observe that the slow growth assumption implies that 
the density $\rho(c_j)$ is not greater than 
$\r \bigl(1+(t-1)^{1+\tau}\bigr)^{1-\epsilon} \le \r t^{\tau}$, 
and Lemma \ref{Lemma:Zeros} may be applied on each ball.
The expression from (\ref{weightednorm}) can be bounded by corresponding norms
carried by the balls $B_j$:
%%%%%%%%%%%%%%%%%%%%%%%%%%%%%
\begin{eqnarray*}
\sum_{k\le m}
  \bigl(\tilde{w}\bigr)^{2(k-m)}
  | \chi  |_{W_2^k(\Ain)}^2 
&\le& 
\sum_{k\le m} \sum_{j\in \J} 
  \bigl(\tilde{w}\bigr)^{2(k-m)}
  |\chi|_{W_2^k(B_j)}^2\\
&\le&   \sum_{j\in \J} \sum_{k\le m}
\left(\r(\tau+1) \left(\frac{1}{3} t\right)^{\tau} \right)^{2(k-m)}
|\chi|_{W_2^k(B_j)}^2\\
&\le&  \kappa^2 c^{-2m}\sum_{j\in \J}  
| \chi|_{W_2^m(B_j)}^2
\le  C | \chi|_{W_2^m(\Aout)}^2.
\end{eqnarray*}
%%%%%%%%%%%%%%%%%%%%%%%%%%%%%
The second inequality follows from the observation that, because $t>3$, 
the width of the annulus, which
satisfies 
$\tilde{w} \ge \r(\tau+1)(t-2)^{\tau}$, is greater than
$\r(\tau+1) \left(\frac{1}{3} t\right)^{\tau}$. 
The third inequality follows from Lemma \ref{Lemma:Zeros}, with $c = (\tau+1) \left(\frac{1}{3}\right)^{\tau+1}$  and $\kappa$ the
constant from Lemma \ref{Lemma:Zeros}. The final inequality follows from the finite intersection property, and $C = N_d \kappa^2 c^{-2m}$.
Combining this with (\ref{weightednorm}), we see that
\begin{equation}
|\chi|_{W_2^m(\reals^d \backslash \mathcal{B})}^2 = |\chi|_{W_2^m(\reals^d)}^2-|\chi|_{W_2^m(\mathcal{B})}^2%\nonumber\\
\le |\phi \chi |_{W_2^m(\Ain)}^2%\nonumber\\
\le C_4 |\chi|_{W_2^m(\Aout)}^2   \label{sqrnorms}%\nonumber\\
\end{equation} 
with $C_4 = C_4(m,d)  =C_3 C $. 

It follows from their definitions that 
$\mathcal{B} = B\left(\r (t-2)^{1+\tau} \right)\subset B\left(\r t^{1+\tau} \right)$
which implies 
$\comp\left(\r t^{1+\tau}\right)\subset \mathcal{B}^{\complement}$. In turn, we
can rewrite the annulus as the difference of two punctured planes 
$\Aout \subset \comp\left(\r (t-3)^{1+\tau}\right)\backslash \comp\left(\r t^{1+\tau}\right).$ 
Applying this to (\ref{sqrnorms}) we get
$$
|\chi|_{W_2^m\left(\comp\left(\r t^{1+\tau}\right)\right)}^2\le 
C_4 \left(|\chi|_{W_2^m\left(\comp\left(\r(t-3)^{1+\tau}\right)\right)}^2 - 
|\chi|_{W_2^m\left(\comp\left(\r t^{1+\tau}\right)\right)}^2\right)
$$
and, consequently:
$$
|\chi|_{W_2^m\left(\comp(\r t^{1+\tau} )\right)}
\le 
\dfrac{\sqrt{C_4}}{\sqrt{1+C_4}}
|\chi|_{W_2^m\left(\comp\left( \r(t-3)^{1+\tau}\right)\right)}= 
\mu 
|\chi|_{W_2^m\left(\comp\left( \r(t-3)^{1+\tau} \right)\right)}
$$
with $\mu := \frac{\sqrt{C_4}}{\sqrt{1+C_4}}<1$.
\end{proof}
Iterating this, we show that the seminorm of the characteristic function on a punctured space decays exponentially.
%
%
%%%%%%%%%%%%%%%%%%%%%%%%%%%%%%%
\begin{lemma}\label{Lemma:Tail}
Under the assumptions of Lemma \ref{Lemma:Bulk}, 
there are constants $C,\nu>0$ depending only on $\epsilon$, $d$, $m$ and $K$
 so that
$$|\chi_{\xi}|_{W_2^m(\comp(\xi,T))} 
\le 
C
|\chi_{\xi}|_{W_2^m(\reals^d)}
\exp\left[
  -\nu \left(\frac{T}{\rho(\xi)}\right)^{\epsilon}
\right] 
$$
holds for $\rho(\xi)\le \gamma$ (the constant from Lemma \ref{Lemma:Bulk}) and $ 3^{\tau+1}\rho(\xi) <T< r_0$.
\end{lemma}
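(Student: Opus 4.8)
The plan is to iterate Lemma~\ref{Lemma:Bulk} a suitable number of times and then convert the resulting geometric decay in the iteration counter into the stated stretched-exponential decay in $T/\rho(\xi)$. Write $\r := \rho(\xi)$ and $\chi := \chi_\xi$, and as before set $1+\tau = 1/\epsilon$. Lemma~\ref{Lemma:Bulk} says that for $3 < t < (r_0/\r)^\epsilon$ we have
$$
|\chi|_{W_2^m(\comp(\xi,\r t^{1/\epsilon}))} \le \mu\, |\chi|_{W_2^m(\comp(\xi,\r[t-3]^{1/\epsilon}))},
$$
so the natural move is to apply this with $t, t-3, t-6, \dots$. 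Starting from an index $t_0 \in (3,6]$ (so that the innermost radius $\r t_0^{1/\epsilon} $ is comparable to $3^{\tau+1}\r$, the lower threshold in the statement) and stepping up by $3$ each time, after $n$ steps one reaches radius $\r(t_0 + 3n)^{1/\epsilon}$ with
$$
|\chi|_{W_2^m(\comp(\xi,\r(t_0+3n)^{1/\epsilon}))} \le \mu^{\,n}\, |\chi|_{W_2^m(\reals^d)},
$$
using at the bottom of the chain the trivial bound $|\chi|_{W_2^m(\comp(\xi,\r t_0^{1/\epsilon}))} \le |\chi|_{W_2^m(\reals^d)}$.

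Next I would reconcile this with a general radius $T$. Given $T$ with $3^{\tau+1}\r < T < r_0$, let $s := (T/\r)^\epsilon$, so $s > 3^{(\tau+1)\epsilon} = 3$ and $s < (r_0/\r)^\epsilon$; choose the largest integer $n$ with $t_0 + 3n \le s$, i.e. $n = \lfloor (s - t_0)/3 \rfloor \ge (s-t_0)/3 - 1 \ge s/6 - C$ for an absolute constant (here one uses $s > 3$ to absorb $t_0 \le 6$). Since $\comp(\xi,T) \subset \comp(\xi,\r(t_0+3n)^{1/\epsilon})$ by monotonicity (the radius $\r(t_0+3n)^{1/\epsilon} \le \r s^{1/\epsilon} = T$), the iterated bound gives
$$
|\chi|_{W_2^m(\comp(\xi,T))} \le \mu^n |\chi|_{W_2^m(\reals^d)} \le \mu^{\,s/6 - C}\,|\chi|_{W_2^m(\reals^d)} = C' \exp\!\left[-\nu\left(\tfrac{T}{\rho(\xi)}\right)^\epsilon\right]|\chi|_{W_2^m(\reals^d)},
$$
with $\nu = -\tfrac{1}{6}\log\mu > 0$ and $C' = \mu^{-C}$, both depending only on $\epsilon, d, m, K$ (through $\mu$). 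One should double-check that all the intermediate indices $t_0, t_0+3, \dots, t_0+3n$ lie strictly between $3$ and $(r_0/\r)^\epsilon$ so that Lemma~\ref{Lemma:Bulk} legitimately applies at each step; the upper constraint holds because $t_0 + 3n \le s < (r_0/\r)^\epsilon$, and the lower because $t_0 > 3$.

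The only mild subtlety — and what I expect to be the main bookkeeping obstacle rather than a conceptual one — is handling the bottom and top of the iteration cleanly: choosing $t_0$ in $(3,6]$ so that the first annulus is admissible while ensuring the innermost radius is $\le 3^{\tau+1}\r$ (so that the hypothesis $T > 3^{\tau+1}\rho(\xi)$ guarantees $s$ exceeds the starting index), and making sure the crude estimate $n \ge s/6 - C$ is valid uniformly once $s > 3$. Everything else is a direct telescoping of the one-step contraction from Lemma~\ref{Lemma:Bulk}, together with the elementary inequality $\mu^{s/6} = \exp[(s/6)\log\mu] = \exp[-\nu s]$ with $s = (T/\rho(\xi))^\epsilon$.
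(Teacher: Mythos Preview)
Your proof is correct and follows essentially the same approach as the paper: both iterate Lemma~\ref{Lemma:Bulk} in steps of $3$ in the variable $t = (T/\rho(\xi))^\epsilon$, use the trivial bound at the bottom of the chain, and convert $\mu^n$ into $\exp[-\nu(T/\rho(\xi))^\epsilon]$. The paper's version is terser (it simply sets $N$ to be the greatest integer below $\tfrac{1}{3}[(T/\r)^\epsilon - 3]$ and obtains $\nu = -\tfrac{1}{3}\log\mu$ rather than your $-\tfrac{1}{6}\log\mu$), but the argument and the bookkeeping are the same.
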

%%%%%%%%%%%%%%%%%%%%%%%%%%%%%%
%
%%%%%%%%%%%%%
\begin{proof}
As before we take $\xi = 0$ and write $\rho(\xi) = \rho(0)= \r$.
Let $T = \r t^{\tau+1}$, and let $N$ be the greatest integer less than 
$\frac{1}{3}\left[\left(\frac{T}{\r}\right)^{\epsilon}-3\right] .$ 
This represents the number of times Lemma \ref{Lemma:Bulk} can be iterated.
Thus, we see that
\begin{eqnarray*}
|\chi|_{W_2^m(\comp(T))} 
&=& 
|\chi|_{W_2^m(\comp(\r t^{\tau+1}))}\\
&\le& \mu |\chi|_{W_2^m( \comp(\r (t-3)^{\tau+1})  )}\\
&\le& \mu^N |\chi|_{W_2^m(\reals^d)}
 \le  
\mu^{\frac{1}{3}\left[\left(\frac{T}{\r}\right)^{\epsilon}-3\right]} |\chi|_{W_2^m(\reals^d)}.
\end{eqnarray*}
The result follows with $e^{-\nu} = \mu^{\frac{1}{3}}$ and $C = \mu^{-1}$.
\end{proof}
Let $q(\xi) := \dist(\xi, \Xi\setminus \{\xi\})$ be the `separation distance' at $\xi$.
By comparing the seminorm of the Lagrange function to that of a suitably scaled bump, it is possible to estimate $|\chi_{\xi}|_m$ in terms of $q(\xi)$, the distance from $\xi$ to its nearest neighbor. On the other hand, by applying the Sobolev embedding lemma for balls, in conjunction with the zeros lemma, Lemma \ref{Lemma:Zeros}, it is possible
to obtain pointwise estimates for $\chi_{\xi}$. This is done in the following theorem.
%%%%%%%%%%%%%%%%%%%%%%%%%%%%%%%%%%%%%%%%%%%
%
%:Lagrange Function
%
%%%%%%%%%%%%%%%%%%%%%%%%%%%%%%%%%%%%%%%%%%%
\begin{theorem} \label{Theorem:Lagrange}
Let $\Xi\subset \Omega$ denote a finite set of centers with associated LD 
$\rho$ of 
precision $\ell\ge m$ 
and stability $K$, 
satisfying $1-\epsilon$ slow growth. 
Then there is $\lambda>0$ depending on $\epsilon, m, d$ and $K$
(specifically, $\lambda:= \nu/2^{\epsilon}$, with $\nu$ from Lemma \ref{Lemma:Tail})
so that for every $\xi \in \Xi$, with $\dist(\xi,\partial\Omega) \le r_0$
$$
|\chi_{\xi} (x)| 
\le 
C \left(\frac{\rho(\xi)}{q(\xi)}\right)^{m-d/2}
\left(1+\frac{|x-\xi|}{\rho(\xi)}\right)^s\ 
 \exp\left[- \lambda
      \left(
         \frac{\min(|x-\xi|,r_0)}{\rho(\xi)}
      \right)^\epsilon
 \right]
$$
with $s=(1-\epsilon)(m-d/2)$.
\end{theorem}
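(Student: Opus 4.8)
The plan is to combine the Sobolev tail estimate of Lemma~\ref{Lemma:Tail} with a local Sobolev embedding, bootstrapped through the zeros lemma, and to control the overall scale of $|\chi_\xi|_m$ via a comparison bump argument. Assume $\xi = 0$ and write $\r := \rho(0)$ and $\chi := \chi_\xi$. The argument naturally splits into three regimes for the point $x$: the far regime $|x| \gtrsim r_0$, the intermediate regime $\r \lesssim |x| \lesssim r_0$, and the near regime $|x| \lesssim \r$.

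First I would establish the global scale: by placing a smooth bump of height $1$ supported in $B(\xi, q(\xi)/2)$ — which interpolates the Lagrange data at $\xi$ and vanishes at every other center — and invoking the variational minimality of $\chi$, one gets $|\chi|_m \le |{\rm bump}|_m \le C q(\xi)^{d/2 - m}$ by scaling. Next, for the pointwise bound at a given $x$, I would apply the Sobolev embedding theorem on a ball $B(x, 3r_x)$ whose radius $r_x$ is chosen so that the ball lies in $\Omega$ and so that $\rho$ is comparable to $r_x$ on it (for the intermediate regime, $r_x \approx \r(1 + |x|/\r)^{1-\epsilon}$ via slow growth; for the near regime, $r_x \approx q(\xi)$ or $\r$). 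Sobolev embedding gives $|\chi(x)| \le \|\chi\|_{L_\infty(B(x,r_x))} \le C \sum_{k\le m} r_x^{k - m - d/2} |\chi|_{W_2^k(B(x,3r_x))}$, and the zeros lemma (Lemma~\ref{Lemma:Zeros}), applicable because $\Xi \cap B(x,3r_x)$ is a norming set by Lemma~\ref{Lemma:Norming_Set}, collapses the right-hand side to $C r_x^{-d/2} |\chi|_{W_2^m(B(x,9r_x))}$ (absorbing the lower-order terms). One then bounds $|\chi|_{W_2^m(B(x,9r_x))} \le |\chi|_{W_2^m(\comp(\xi, |x| - 9r_x))}$ when $x$ is far enough from $\xi$, and feeds this into Lemma~\ref{Lemma:Tail}, producing the factor $\exp[-\nu(\,(|x| - 9r_x)/\r\,)^\epsilon]$. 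The shift from $|x| - 9r_x$ to $|x|$ inside the exponential costs only a constant adjustment of $\nu$ (using $9r_x \le |x|/2$, say, which holds once $|x|$ is a fixed multiple of $\r$), and this is exactly where the stated $\lambda = \nu / 2^\epsilon$ comes from. Combining with the scale bound yields the prefactor $(\rho(\xi)/q(\xi))^{m-d/2}$ together with the $r_x^{-d/2} |\chi|_m \approx \r^{-d/2}(1+|x|/\r)^{-(1-\epsilon)d/2} q(\xi)^{d/2-m}$ type terms, which after using slow growth to compare $r_x$ with $\r$ reorganize into the claimed $(1 + |x-\xi|/\rho(\xi))^s$ polynomial factor with $s = (1-\epsilon)(m-d/2)$.

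For the near regime $|x| \lesssim \r$, the exponential is bounded below and above by constants, so one only needs $|\chi(x)| \le C(\rho(\xi)/q(\xi))^{m-d/2}$; this follows from the same Sobolev-embedding-plus-zeros-lemma step with $r_x \approx q(\xi)$ together with the global bound $|\chi|_m \le C q(\xi)^{d/2-m}$, noting $\rho(\xi) \ge q(\xi)$ up to constants so the ratio is $\ge 1$. For the far regime $|x| \gtrsim r_0$, one simply caps the exponential argument at $r_0/\rho(\xi)$ (matching the $\min(|x-\xi|, r_0)$ in the statement) and uses the $T \to r_0$ endpoint of Lemma~\ref{Lemma:Tail} together with the polynomial growth of the prefactor; here the restriction $\dist(\xi, \partial\Omega) \le r_0$ guarantees we never need the tail estimate beyond the range where it is valid.

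The main obstacle I anticipate is bookkeeping the interlocking radii and exponents so that the polynomial prefactor comes out with exactly the exponent $s = (1-\epsilon)(m-d/2)$ rather than something merely comparable: one must track how the factor $r_x^{-d/2}$ from Sobolev embedding, the factor $q(\xi)^{d/2-m}$ from the bump comparison, and the slow-growth comparison $r_x \approx \r(1 + |x|/\r)^{1-\epsilon}$ combine, and verify that the lower-order Sobolev seminorms $|\chi|_{W_2^k}$ for $k < m$ are genuinely absorbed by the zeros lemma (they are, since Lemma~\ref{Lemma:Zeros} bounds the full weighted sum $\sum_{k \le m} r^{2(k-m)} |\chi|_{W_2^k(B(x,r))}^2$ by $|\chi|_{W_2^m(B(x,3r))}^2$). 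A secondary subtlety is ensuring the balls used for embedding stay inside $\Omega$ and inside the range of validity of Lemma~\ref{Lemma:Tail}, which is why the hypothesis $\dist(\xi,\partial\Omega) \le r_0$ and the truncation $\min(|x-\xi|, r_0)$ appear; handling the transition between the three regimes cleanly, so the constant $C$ is uniform, is the last piece to nail down.
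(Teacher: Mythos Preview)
Your overall strategy---bump comparison for the global seminorm bound $|\chi_\xi|_m\le Cq(\xi)^{d/2-m}$, Sobolev embedding combined with the zeros lemma for the pointwise estimate, then feeding in the tail decay from Lemma~\ref{Lemma:Tail}---is exactly the paper's approach. Two points in your sketch need correction, however.

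First, the Sobolev embedding plus zeros lemma produces $|\chi_\xi(x)|\le C\,\rho(x)^{m-d/2}\,|\chi_\xi|_{W_2^m(B(x,3\rho(x)))}$, not $r_x^{-d/2}|\chi_\xi|_{W_2^m}$ as you wrote; the power is $m-d/2$, not $-d/2$. It is precisely this factor $\rho(x)^{m-d/2}$, bounded via slow growth by $\rho(\xi)^{m-d/2}(1+|x-\xi|/\rho(\xi))^{(1-\epsilon)(m-d/2)}$, that pairs with $q(\xi)^{d/2-m}$ to give the prefactor $(\rho(\xi)/q(\xi))^{m-d/2}$ times the polynomial with the claimed exponent $s$. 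With your $r_x^{-d/2}$ the exponents do not assemble correctly (and you would need self-majorization rather than slow growth to bound it, giving the wrong polynomial power). Second, and more substantively, the zeros lemma requires $\chi_\xi$ to vanish on the norming set, yet $\chi_\xi(\xi)=1$; whenever $\xi\in B(x,3\rho(x))$---in particular throughout your near regime---you must first delete $\xi$ from the norming set. This is the role of Remark~\ref{rmrk} and the reason the hypothesis is $\ell\ge m$ rather than $\ell\ge m-1$: the full $\Xi_x$ is a norming set for $\Pi_\ell$, so $\Xi_x\setminus\{\xi\}$ is still a norming set for $\Pi_{m-1}$. Consequently the correct radius in every regime is $\rho(x)$, not $q(\xi)$; at scale $q(\xi)$ there is in general no norming set available, so your proposed handling of the near regime does not go through as written.
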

%%%%%%%%%%%%%%%%%%%%%%
%
%%%%%%%%%%%%%%%%%%%%%%
\begin{proof}
We may compare the Lagrange function $\chi_{\xi}$ to another (worse) interpolant $\sigma(\frac{\cdot-\xi}{q(\xi)})$, 
to make the initial observation that  
$$|\chi_{\xi}|_m \le \bigl(q(\xi)\bigr)^{d/2 -m} |\sigma|_m.$$
%
%{\bf Case 1 ($|\xi-x|<k_{\epsilon}\rho(\xi)$):}
The Sobolev embedding theorem implies that $|u(0)|\le C\|u\|_{W_2^m(B(0,1))}$. Setting
$u(y)  = \chi_{\xi}\bigl(x+\rho(x) y\bigr)$, we have
$$
|\chi_{\xi} (x)|
\le 
C \left(\sum_{k\le m}\rho(x)^{2k-d} |\chi_{\xi}|_{W_2^k\bigl(B(x,\rho(x))\bigr)}^2\right)^{1/2}.$$
By Remark \ref{rmrk},
$\bigl(\Xi\cap B(x,3\rho(x)\bigr)\setminus\{\xi\}$ 
forms a norming set for $\Pi_{m}$ in $B(x,\rho(x))$. 
Thus, Lemma \ref{Lemma:Zeros} implies 
\begin{eqnarray*}
|\chi_{\xi} (x)|
&\le& 
C \rho(x)^{m-d/2}
\left(\sum_{k\le m}
  \rho(x)^{2(k-m)} |\chi_{\xi}|_{W_2^k\bigl(B(x,\rho(x)\bigr)}^2 
\right)^{1/2}\\
&\le& 
C  \rho(x)^{m-d/2}|\chi_{\xi}|_{W_2^m\bigl(\reals^d\bigr)}
\le C \left( \frac{ \rho(x)}{q(\xi)}\right)^{m-d/2}.
\end{eqnarray*}
Applying the slow growth assumption 
$\rho(x)
\le 
\rho(\xi) \left(1+\frac{|x-\xi|}{\rho(\xi)}\right)^{1-\epsilon}$
gives the result for $\rho(\xi)>\gamma$, since in this case
the factor $ \exp\left[- \lambda
      \left(
         \frac{\min(|x-\xi|,r_0)}{\rho(\xi)}
      \right)^\epsilon
 \right]\ge C$, a constant depending only on $r_0,\epsilon$ and $\lambda$.

This also proves the theorem 
for the case $\rho(\xi)\le\gamma$ and 
$|\xi-x|\le k_{\epsilon}\rho(\xi)$, where 
$$k_\epsilon:= 6^{1/\epsilon}2^{(1-\epsilon)/\epsilon}.$$
Suppose $\rho(\xi)\le\gamma$ and $|\xi-x|\ge k_{\epsilon}\rho(\xi)$.
For $T = \dist(\xi,x)$ in the range
$3^{\tau+1}\rho(\xi) \le T \le r_0$, 
we again consider centers in the ball, $B(x,3\rho(x))$, where Lemma \ref{Lemma:Zeros}
can be applied. By a similar argument to the one employed already, we have
\begin{eqnarray*}
|\chi_{\xi}(x)|
&\le& C \rho(x)^{m-d/2} |\chi_{\xi}|_{W_2^m(\comp(\xi,T-3\rho(x)))}\\
&\le& C \bigl(\rho(x)/q(\xi)\bigr)^{m-d/2}
\exp\left[
  -\nu\left(\frac{T-3\rho(x)}{\rho(\xi)}\right)^{\epsilon}
\right]. 
\end{eqnarray*}
Writing $\Gamma := \frac{|x-\xi|}{\rho(\xi)}$, the bracketed statement in the exponent can be written as 
$$\left[-\nu\left(\Gamma -3\frac{\rho(x)}{\rho(\xi)}\right)^{\epsilon}\right]\le 
\left[-\nu\left(\Gamma-3(2\Gamma)^{1-\epsilon}\right)^{\epsilon}\right],$$ 
where we invoke slow growth and the fact that $\Gamma>1$ to estimate $\frac{\rho(x)}{\rho(\xi)}\le (2\Gamma)^{1-\epsilon}.$  
Observe that
$\Gamma> k_{\epsilon} = 6^{1/\epsilon} \cdot 2^{(1-\epsilon)/\epsilon}$,
implies that 
$\Gamma - 3(2\Gamma)^{1-\epsilon}\ge \Gamma/2$, 
so
$[-\nu\left(\Gamma -3\frac{\rho(x)}{\rho(\xi)}\right)]
< 
[-\lambda\left(\Gamma\right)^{\epsilon}]$ 
with 
$\lambda = \nu/ 2^{\epsilon}$,
and the theorem follows.
\end{proof}

%%%%%%%%%%%%%%%%%%%%%%%%%%%%%%%%%%%%%%%%%%%
%
%
%%%%%%%%%%%%%%%%%%%%%%%%%%%%%%%%%%%%%%%%%%%
\section{Penalized Lebesgue Constant Estimates and Main Results}
To apply the result of the previous section, we need to estimate the contribution at $x$ from every Lagrange function $\chi_{\xi}$. 
Despite their fast decay, we will never succeed if we do not control the number of centers $\#\Xi$. Normally this would be accomplished with a constraint on the spacing of $\Xi$, a so-called quasi-uniformity condition, which would permit control of $\#\Xi$ by a function of the global density $h$. In our setting this is out of the question. 
Rather, we make the (mostly harmless) assumption that the distance between two nearby centers is 
controlled from below by the local density: $q(\xi) = \min_{\zeta \in \Xi, \zeta\ne \xi} |\xi - \zeta|\ge C \rho(\xi)$.
We call such an assumption {\em weak quasi-uniformity}.
The ratio of weak quasi-uniformity is $c_0 = \max_{\xi\in \Xi}\rho(\xi)/q(\xi)$.
%%%%%%%%%%%%%%%%%%%%%%%%%%%%%%%%%%%%%%%%%%%
%
%:Lebesgue Constant
%
%%%%%%%%%%%%%%%%%%%%%%%%%%%%%%%%%%%%%%%%%%%
\begin{theorem}\label{Theorem:Lebesgue}
Assume $f\in C_{c}(\Omega)$ satisfies $\dist\bigl(\supp f,\partial \Omega\bigr) =: r_0$. 
Furthermore, assume $\Xi\subset \Omega$ has local density function 
$\rho$ of precision $\ell \ge m$ and stability $K$, satisfying $1-\epsilon$
slow growth, and weak quasi-uniformity, with ratio $c_0$. 
Then the penalized Lebesgue constant $\L_{\rho,\sigma}$ is bounded, with a constant 
$C(\Omega, \sigma,d,\epsilon,c_0,K,r_0)$ depending only on $\Omega,\sigma,\epsilon, c_0,K$ and $r_0$
(and not  on $\rho$).
\end{theorem}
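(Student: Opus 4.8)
The plan is to bound the sum $\sum_{\xi \in \Xi_f} |\chi_\xi(x)| (1 + |x-\xi|/\rho(x))^\sigma$ uniformly in $x \in \Omega$ by inserting the pointwise Lagrange estimate of Theorem~\ref{Theorem:Lagrange} and then organizing the centers into dyadic-type shells around $x$. First I would absorb the weak quasi-uniformity hypothesis: the prefactor $(\rho(\xi)/q(\xi))^{m-d/2}$ in Theorem~\ref{Theorem:Lagrange} is bounded by $c_0^{m-d/2}$, so it contributes only a constant. Next I would convert all the $\rho(\xi)$'s appearing in the estimate into $\rho(x)$'s: using self-majorization \eqref{sm} (equivalent to slow growth with $\tau + 1 = 1/\epsilon$) one has $\rho(\xi) \ge C_{sm}\rho(x)(1+|x-\xi|/\rho(x))^{-\tau}$ and, from slow growth \eqref{sg} applied with the roles reversed, $\rho(\xi) \le \rho(x)(1+|x-\xi|/\rho(x))^{1-\epsilon}$. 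Hence $|x-\xi|/\rho(\xi)$ is comparable, up to polynomial factors in $1+|x-\xi|/\rho(x)$, to $|x-\xi|/\rho(x)$. Plugging these comparisons into the estimate of Theorem~\ref{Theorem:Lagrange}, every term $|\chi_\xi(x)|(1+|x-\xi|/\rho(x))^\sigma$ is bounded by $C(1 + u_\xi)^{N} \exp[-\lambda' (\min(|x-\xi|, r_0)/\rho(x))^\epsilon]$ where $u_\xi := |x-\xi|/\rho(x)$, $N$ is a fixed exponent built from $s$, $\sigma$, $\tau$, and $\lambda' $ is a possibly smaller positive constant (the passage $\rho(\xi)\to\rho(x)$ inside the stretched exponential costs a factor, but since $t \mapsto t^\epsilon$ is subadditive and we only lose a polynomial factor in $u_\xi$, the exponential still dominates). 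Note $\min(|x-\xi|,r_0)$: for $\xi$ with $|x-\xi| \le r_0$ this is $u_\xi \rho(x)$; for $|x-\xi| > r_0$ it is $r_0$, but then $\rho(x) \le \rho(\xi)(1+|x-\xi|/\rho(\xi))^{1-\epsilon}$... here one uses that $\sup_{x\in\Omega}\rho(x)$ is finite (bounded by a multiple of $r_1 = \mathrm{diam}\,\Omega$ via self-majorization from any fixed center), so only finitely many disjoint "unit cells" fit, keeping that part of the sum finite as well.

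The second main step is the covering/counting argument. For each integer $j \ge 0$, let $\Lambda_j := \{\xi \in \Xi_f : 2^j \le 1 + |x-\xi|/\rho(x) < 2^{j+1}\}$, i.e.\ the centers in the annulus $A(x, (2^{j+1}-1)\rho(x), \ldots)$. Weak quasi-uniformity gives $q(\xi) \ge C\rho(\xi) \ge C' \rho(x) 2^{-\tau j}$ for $\xi \in \Lambda_j$ (self-majorization again), so the balls $B(\xi, q(\xi)/2)$ are pairwise disjoint and all sit inside a ball of radius $\le C 2^j \rho(x)$ around $x$; a volume comparison yields $\#\Lambda_j \le C (2^j \rho(x))^d / (2^{-\tau j}\rho(x))^d = C 2^{(1+\tau)dj} = C 2^{dj/\epsilon}$. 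Therefore
\begin{equation*}
\sum_{\xi \in \Xi_f} |\chi_\xi(x)| \left(1 + \frac{|x-\xi|}{\rho(x)}\right)^\sigma
\le
C \sum_{j \ge 0} 2^{dj/\epsilon} \, 2^{Nj} \exp\left[-\lambda' \left(2^{j-1}\right)^\epsilon \cdot \mathbf{1}_{[|x-\xi|\le r_0]}\right] + (\text{finite boundary terms}).
\end{equation*}
For the shells with $2^j \rho(x) \le r_0$ the exponential $\exp[-\lambda' 2^{\epsilon(j-1)}]$ beats the polynomial-times-geometric growth $2^{(d/\epsilon + N)j}$, so the series converges to a constant independent of $x$ and of $\rho$ (the summand depends on $x$ only through the cut-off point $j \lesssim \log_2(r_0/\rho(x))$, above which one uses the crude $\sigma = 0$-type bound and the finiteness of the total number of cells in $\Omega$, which is controlled by $r_1/\inf\rho$... but $\inf \rho$ may be tiny, so instead one notes that for $|x-\xi| > r_0$, Theorem~\ref{Theorem:Lagrange} gives decay $\exp[-\lambda(r_0/\rho(\xi))^\epsilon]$ which, combined with $\rho(\xi)$ being bounded below on the relevant region by self-majorization from $x$ — no: here one simply keeps the full geometric decay in $|x - \xi|$ coming from the polynomial-type factor being beaten once $j$ is large, using that $\#\Lambda_j$ still only grows like $2^{dj/\epsilon}$ and the surviving bound $|\chi_\xi(x)| \le C(\rho(x)/q(\xi))^{m-d/2}(1+u_\xi)^s \le C 2^{-\epsilon(m-d/2)j}(1+u_\xi)^{\ldots}$ — wait, that is not summable). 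Let me restate: the clean way is that for $|x-\xi|>r_0$ one has $\rho(\xi)^{-\epsilon} \ge c/\rho(\xi)^\epsilon$ and $\rho(\xi) \le \rho(x)(1+u_\xi)^{1-\epsilon}$ so $\exp[-\lambda(r_0/\rho(\xi))^\epsilon] \le \exp[-\lambda r_0^\epsilon \rho(x)^{-\epsilon}(1+u_\xi)^{-\epsilon(1-\epsilon)}]$, and since $u_\xi \le r_1/\rho(x)$ is bounded in terms of $\rho(x)$, this is $\le \exp[-c(r_0/r_1)^{\epsilon(1-\epsilon)}\rho(x)^{-\epsilon'}]$ which is extremely small, dominating the at most $C(r_1/\rho(x))^{d/\epsilon}$ such terms. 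So the far part contributes $o(1)$.

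The main obstacle is exactly this bookkeeping of constants as $\rho(x) \to 0$: the number of relevant dyadic shells $\log_2(r_0/\rho(x))$ is unbounded, and the count $\#\Lambda_j$ grows geometrically with rate $2^{d/\epsilon}$, so one must be careful that the \emph{stretched} exponential $\exp[-\lambda' 2^{\epsilon j}]$ — which is eventually doubly-dominant over $2^{(d/\epsilon+N)j}$ — kicks in at a shell index $j_0$ depending only on $\lambda', d, \epsilon, N$ and \emph{not} on $\rho(x)$; for $j < j_0$ there are only boundedly many shells, each with a bounded summand after using weak quasi-uniformity, so their total is a constant. I would organize the final estimate as: (i) shells $j < j_0$ — finitely many, bounded contribution; (ii) shells $j_0 \le j$ with $2^j\rho(x) \le r_0$ — convergent series with $x$-independent sum; (iii) $|x-\xi| > r_0$ — super-exponentially small, crushing the polynomially-many terms. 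Summing the three cases gives the desired bound $C(\Omega, \sigma, d, \epsilon, c_0, K, r_0)$, with the $\Omega$ and $r_1$ dependence entering only through case (iii). This completes the proof.
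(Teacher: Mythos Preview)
Your overall architecture is exactly the paper's: insert the Lagrange estimate of Theorem~\ref{Theorem:Lagrange}, absorb $(\rho(\xi)/q(\xi))^{m-d/2}\le c_0^{m-d/2}$ by weak quasi-uniformity, split the sum at $|x-\xi|=r_0$, and for the near part use dyadic shells $\{2^j\le 1+|x-\xi|/\rho(x)<2^{j+1}\}$ with the count $\#\Lambda_j\le C\,2^{dj/\epsilon}$ obtained from self-majorization plus weak quasi-uniformity. That near-part argument matches the paper's treatment of the sum $II$ essentially line for line.

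One slip: when you pass from $\rho(\xi)$ to $\rho(x)$ inside the stretched exponential, the exponent on $u_\xi=|x-\xi|/\rho(x)$ drops from $\epsilon$ to $\epsilon^2$, not merely the constant $\lambda$. Indeed slow growth gives $|x-\xi|/\rho(\xi)\ge u_\xi(1+u_\xi)^{-(1-\epsilon)}\ge c\,u_\xi^{\epsilon}$, so after the outer $\epsilon$-power the surviving decay is $\exp[-\lambda' 2^{\epsilon^2 j}]$, not $\exp[-\lambda' 2^{\epsilon j}]$. This is harmless for convergence (and the paper arrives at the same $\epsilon^2$), but your parenthetical ``we only lose a polynomial factor in $u_\xi$'' understates what happens.

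The one genuine difference is the far part $|x-\xi|>r_0$. The paper does \emph{not} continue with distance shells; instead it decomposes $\Xi$ into level sets $\Xi_k=\{\xi:\rho(\xi)\sim 2^{-k}\}$, counts $\#\Xi_k\le C|\Omega|2^{kd}$ directly from $q(\xi)\ge c_0^{-1}2^{-k}$, and obtains the $x$-independent convergent series $\sum_k 2^{k(d+\sigma_1)}\exp[-\lambda r_0^\epsilon 2^{k\epsilon}]$. Your route---bounding the total number of far centers by $C(r_1/\rho(x))^{d/\epsilon}$ and crushing it with $\exp[-c\,\rho(x)^{-\epsilon^2}]$---also works and yields the same dependence on $\Omega,r_0,r_1$, but it is visibly the source of your mid-proof confusion. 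The paper's level-set decomposition avoids the $\rho(x)\to 0$ bookkeeping entirely and is cleaner; your version has the minor advantage of reusing the same shell structure throughout. Either way the conclusion is correct.
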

%%%%%%%%%%%%%%%%%%%%%%%%%%%%%%%%%%%%%%%%%%%
\begin{proof}
Set $\sigma_1 = \sigma+(1-\epsilon)(m-d/2)$.  
Theorem \ref{Theorem:Lagrange} 
permits us 
to estimate the penalized Lebesgue constant as 
$\L_{\rho,\sigma} 
\le C \sum_{\xi \in\Xi} 
\left(1+\frac{|\xi -x|}{\rho(\xi)}\right)^{\sigma_1}
\exp
  \left[- \lambda
    \left(\frac{\min{|x-\xi|,r_0}}{\rho(\xi)}\right)^\epsilon
  \right]=I+II,$
where 
\begin{eqnarray*}
I &:=& 
\sum_{ \substack{\xi \in\Xi\\ |x-\xi|>r_0}}   
  \left(1+\frac{|\xi -x|}{\rho(\xi)}\right)^{\sigma_1}
  \exp
    \left[- \lambda
      \left(\frac{r_0}{\rho(\xi)}\right)^\epsilon
    \right],\\
II &:=&
 \sum_{ \substack{\xi \in\Xi\\ |x-\xi|\le r_0}}
  %\gamma_{\sigma_1}  (\frac{|\xi -x|}{\rho(\xi)})
  \left(1+\frac{|\xi -x|}{\rho(\xi)}\right)^{\sigma_1}
      \exp\left[- \lambda\left(\frac{|x-\xi|}{\rho(\xi)}\right)^\epsilon\right].
\end{eqnarray*}

To treat $I$, we decompose the sum into subsets of $\Xi$ based on the size of $\rho$: 
$$\Xi_k := \{\xi\in \Xi\mid2^{1-k}\le\rho(\xi)\le 2^{-k}\}.$$
Since $\Omega$ is bounded, $\rho$ is bounded above by $r_1$ and, hence,
there is $k_0\in \ints$ so that $\rho(\xi)\le 2^{-k_0}$, for all $\xi$.
By weak quasi-uniformity, there is $\kappa$ so that  for any $k\in \ints$, $2^{-(k +\kappa)}\le q(\xi)$ for all $\xi \in \Xi_k$ 
and $\# \Xi_k \le C_d |\Omega|2^{-d(k +\kappa)}$, which implies
\begin{eqnarray*}
I 
&\le& 
\sum_{k=k_0}^{\infty}
\sum_{\xi\in \Xi_k}
  \left(1+r_1 2^{k+1}\right)^{\sigma_1} 
  \exp\left[- \lambda \left(r_02^{k}\right)^\epsilon\right]\\
&\le&
C
\sum_{k=k_0}^{\infty}
 |\Omega| 2^{(k +\kappa)d}2^{-k\sigma_1} \exp\left[- b 2^{k\epsilon}\right],
\end{eqnarray*}
with $0< b := \lambda r_0^{\epsilon}.$ 
Because $\epsilon>0$, the final sum is bounded, with
a sum depending only on $r_0,\epsilon, r_1,c_0,|\Omega|,d$ and $\sigma$.

To treat $II$, we employ subsets of $\Xi$ defined according to the distance from $x$
$$X_k:=\{\xi\in\Xi\mid2^{k}\rho(x)\le |x-\xi| \le 2^{k+1} \rho(x)\}.$$ 
Slow growth and self-majorization allow us to estimate $\rho(\xi)$ 
from above and below on $\Xi_k$.
Invoking the self-majorization of $\rho$, we see that
$\rho(\xi) \ge \rho(x)2^{-\tau(k+2)-j_0}$ for $\xi \in X_k$,
and for some number $j_0$  depending only on $\epsilon$ for which $2^{-j_0}\le C_{sm}$.
This has two consequences. 
First, 
$\frac{|x-\xi|}{\rho(\xi)}\le 2^{j_0+\tau(k+2)} \frac{|x-\xi|}{\rho(x)}\le 2^{j_1+ k(\tau+1)}$, 
which implies the estimate
$$
\left(1+\frac{|\xi -x|}{\rho(\xi)}\right)^{\sigma_1}
\le C 2^{k(\tau+1)\sigma_1 }.$$
Second, quasi-uniformity ensures that $q(\xi)\ge  c_0\rho(x)2^{-\tau(k+2)-j_0}$, which allows us to estimate $\# X_k$ by 
$(2^{k+1} \rho(x))^d /(c_0\rho(x)2^{-\tau(k+2)-j_0})^d$. Hence
$\# X_k \le C 2^{d k(\tau+1)}$
with a constant depending on $\epsilon,d$ and $c_0$.
The greatest value of $\rho$ in $X_k$  is not larger than
$ \rho(x)  2^{(k+2)(1-\epsilon)}.$ 
It follows that 
$\frac{|x-\xi|}{\rho(\xi)}
\ge b \frac{|x-\xi|}{\rho(x)}2^{k(\epsilon-1)}
\ge b 2^{k\epsilon},$ 
for $b>0$ depending on $\epsilon$.
Thus
\begin{eqnarray*}
II &\le& 
\sum_{k=0}^{\infty}
\sum_{\xi\in \Xi_k}
  %\gamma_{\sigma_1}  (\frac{|\xi -x|}{\rho(\xi)})
  (1+\frac{|\xi -x|}{\rho(\xi)})^{\sigma_1}
   \exp\left[- \lambda \left(\frac{|x-\xi|}{\rho(\xi)}\right)^\epsilon\right]\\
&\le&
C
\sum_{k=0}^{\infty}  2^{dk(\tau+1)}
 2^{k(\tau+1)\sigma_1 }  \exp\left[- \lambda' 2^{-k\epsilon^2}\right],
\end{eqnarray*}
with $\lambda' =\lambda b^{\epsilon}>0$. It follows that $II$ is bounded by
a constant depending only on $\epsilon,d,$ and $\sigma$.
\end{proof}
%%%%%%%%%%%%%%%%%%%%%%%%%%%%%%%%%%%%%%%%%%%
The following corollary applies Lebesgue's lemma to the local approximation results of \cite{DeRo}. Of special interest is the fact that the schemes used by DeVore and Ron to produce  local results are quite abstract and do not lend themselves to direct implementation. For functions of full smoothness $2m$, the 
approximant is determined by modifying an integral representation involving the surface spline. This is worsened when functions of less than full smoothness are treated. In this case a complicated $K$-functional argument is invoked in order to get the result. Interpolation, in contrast, is directly implementable and universal in the sense that it requires only the data to work, and makes no extra requirements of the target function. 
%%%%%%%%%%%%%%%%%%%%%%%
\begin{corollary}
Assume that
$\Xi$ is a set of centers in $\Omega$ having a local density function $\rho$ of precision $\ell$ greater than $2m+1-d$, with $1-\epsilon$ slow growth, weak quasi-uniformity  and, furthermore, that $f\in C(\Omega)$ satisfies $\mathrm{dist}(\supp f,\partial \Omega)>r_0$. 
If $f\in\mathcal{A}^s$, with $\mathcal{A}^s = C^{2m}(\Omega)$  when $s=2m$ or $B_{\infty,\infty}^{s}(\Omega)$ when $0<s<2m$, then 
$$|f(x)-I_{\Xi}f(x)|\le C\; \rho(x)^s \;\|f\|_{\mathcal{A}^{s}(\Omega)}$$
with a constant depending on $\Omega, m,\epsilon,c_0$ and $r_0$.
\end{corollary}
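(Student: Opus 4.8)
The plan is to combine the penalized Lebesgue constant bound of Theorem \ref{Theorem:Lebesgue} with DeVore and Ron's local approximation scheme via the weighted Lebesgue lemma sketched in Section 2. First I would fix $\sigma = s$ (noting $s \le 2m$, so a precision $\ell > 2m+1-d$ comfortably exceeds the requirement $\ell \ge m$ needed for Theorem \ref{Theorem:Lagrange}, and in fact the larger precision is what is needed to reproduce the approximation order $s$ up to $2m$). The key structural identity is that interpolation is a projector onto $S_{\Xi}$: for any $s_{f,\Xi}\in S_{\Xi}$ one has $f - I_{\Xi}f = (f - s_{f,\Xi}) - I_{\Xi}(f - s_{f,\Xi})$, hence
\begin{equation*}
\left\|\frac{f - I_{\Xi}f}{\rho^{s}}\right\|_{\infty}
\le
(1 + \L_{s,\rho}')\left\|\frac{f - s_{f,\Xi}}{\rho^{s}}\right\|_{\infty},
\end{equation*}
where $\L_{s,\rho}'$ is the operator norm of $I_{\Xi}$ on the weighted space $L_\infty(\rho^{-s}\,\dif x)$. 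By the Hölder estimate reproduced before \eqref{pen_leb_const_def}, $\L_{s,\rho}'$ is controlled by $\L_{\sigma,\rho}$ with $\sigma = s(1-\epsilon)$, provided $\rho$ satisfies the slow growth assumption — which is exactly the hypothesis here — and Theorem \ref{Theorem:Lebesgue} bounds this by a constant depending only on $\Omega, s, d, \epsilon, c_0, K, r_0$.

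Next I would invoke the DeVore--Ron scheme $f \mapsto s_{f,\Xi}$: under the stated precision hypothesis $\ell > 2m+1-d$ and for $f \in \A^s$ (with $\A^s = C^{2m}(\Omega)$ when $s = 2m$ and $\A^s = B^s_{\infty,\infty}(\Omega)$ when $0 < s < 2m$), their results give the pointwise local bound $|f(y) - s_{f,\Xi}(y)| \le C\,\rho(y)^s \|f\|_{\A^s(\Omega)}$ for all $y \in \Omega$, as mentioned in Section 2. Dividing by $\rho(y)^s$ and taking the supremum over $y$ shows $\|(f - s_{f,\Xi})/\rho^s\|_\infty \le C\|f\|_{\A^s(\Omega)}$. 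Feeding this into the displayed Lebesgue-lemma inequality yields $\|(f - I_{\Xi}f)/\rho^s\|_\infty \le C\|f\|_{\A^s(\Omega)}$, and multiplying back through by $\rho(x)^s$ at a fixed point $x$ gives precisely $|f(x) - I_{\Xi}f(x)| \le C\,\rho(x)^s\|f\|_{\A^s(\Omega)}$, with the constant inheriting dependence only on $\Omega, m, \epsilon, c_0, r_0$ (the dependence on $K$ being absorbable since $K$ can itself be taken as a function of $\epsilon$ and the geometry once a canonical LD is chosen, as in \cite{H}).

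The main obstacle — more a matter of care than of genuine difficulty — is verifying that the $s_{f,\Xi}$ produced by \cite{DeRo} actually lies in $S_{\Xi}$, or at least can be replaced by an element of $S_{\Xi}$ without degrading the estimate, since the Lebesgue lemma as written requires $s_{f,\Xi}\in S_{\Xi}$; if the DeVore--Ron approximant is built from the same surface spline $\phi_m$ with a polynomial correction of degree $m-1$ this is immediate, and otherwise one argues that $I_{\Xi}$ reproduces $S_{\Xi}$ and that $f - s_{f,\Xi}$ can be interpolated directly. A secondary bookkeeping point is matching the exponent $s$ appearing in the $B^s_{\infty,\infty}$ smoothness to the power of $\rho$ in the penalized constant: one must check that the admissible range $0 < s \le 2m$ for the approximation scheme is compatible with the range of $\sigma = s(1-\epsilon)$ for which Theorem \ref{Theorem:Lebesgue} was proved, which it is, since that theorem imposes no upper bound on $\sigma$ beyond finiteness. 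The rest is routine substitution.
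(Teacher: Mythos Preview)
Your approach is exactly the paper's: weighted Lebesgue lemma plus Theorem \ref{Theorem:Lebesgue} plus the DeVore--Ron local estimate. The only substantive work in the paper's proof is precisely the point you flag as ``the main obstacle,'' and your proposed resolution of it is not quite right.

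Membership in $S_{\Xi}$ is \emph{not} immediate from the fact that the DeVore--Ron approximant is a linear combination of translates of $\phi_m$: the definition \eqref{space} also requires the side conditions $\sum_{\xi}A_{\xi}q(\xi)=0$ for all $q\in\Pi_{m-1}$, and these are not automatic. The paper verifies them directly. The DeVore--Ron coefficients have the explicit form $A_{\xi}=\int_{\reals^d}\Delta^m F(\alpha)\,a(\xi,\alpha)\,\dif\alpha$ for a compactly supported $F$ (equal to $f$ when $s=2m$, or a suitable regularization $g$ when $s<2m$). Using the polynomial reproduction of the local scheme $a$ one gets $\sum_{\xi}A_{\xi}p(\xi)=\int\Delta^m F(\alpha)\,p(\alpha)\,\dif\alpha$, and Green's identity on a large ball (using that $F$ is compactly supported and $\Delta^m p=0$ for $p\in\Pi_{m-1}$) shows this integral vanishes. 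This is where the precision hypothesis $\ell>2m+1-d$ is actually used, and it is the one nontrivial ingredient beyond what you wrote. Your fallback suggestion (``$I_{\Xi}$ reproduces $S_{\Xi}$ and $f-s_{f,\Xi}$ can be interpolated directly'') does not help: the Lebesgue-lemma identity $f-I_{\Xi}f=(f-s)-I_{\Xi}(f-s)$ requires $I_{\Xi}s=s$, which is exactly $s\in S_{\Xi}$.
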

%%%%%%%%%%%%%%%%%%%%%%%%
\begin{proof}
We must show that there is a constant $C$ so that, for any target function $f\in \mathcal{A}^s$ and LD $\rho$ satisfying the conditions of the corollary, there exists 
$s_{f,\Xi}\in  S_\Xi$ so that 
\begin{equation}\label{App_prop}
\left\|\frac{f-s_{f,\Xi}}{\rho^s}\right\|_{\infty} \le C .
\end{equation} 
In \cite{DeRo}, an operator 
$T_{\Xi}:C_c^{2m}(\reals^d)\to \spam_{\xi\in\Xi}\phi(\cdot-\xi)$ 
is constructed mapping compactly supported functions to linear combinations of the RBF
$$T_{\Xi} f 
= 
\sum_{\xi\in\Xi} 
\left(
  \int_{\reals^d} 
    \Delta^m f(\alpha) a(\xi, \alpha)
  \dif \alpha
\right) 
\phi(\cdot - \xi),$$ 
with the desired approximation property: 
i.e., $s_{f,\Xi} = T_{\Xi} f$ satisfies (\ref{App_prop}).
For functions of lower smoothness 
(i.e., $f \in B_{\infty,\infty}^{s}$), 
(\ref{App_prop}) holds by first approximating $f$ by a 
smooth function $g$ satisfying the properties 
$|\Delta^mg(x)|\le C \rho(x)^{s-2m} \|f\|_{B_{\infty,\infty}^s}$ 
and 
$|f(x)-g(x)|\le C \rho(x)^s \|f\|_{B_{\infty,\infty}^s}$ 
and then selecting $s_{f,\Xi} := T_{\Xi} g$. 
In either case, the approximant 
$s_{f,\Xi} $ has coefficients determined by an expression of the form
$A_{\xi} =\int_{\reals^d} \Delta^m F(\alpha) a(\xi, \alpha)\dif \alpha$, 
where $F$ has compact support. 
Utilizing the polynomial reproduction of $a$, we see that, for 
$p \in \Pi_{\min(\ell,2m-1)}$,
$$\sum_{\xi\in\Xi}A_{\xi} p(\xi) 
= 
\int_{\reals^d}\Delta^mF(\alpha) a(\xi, \alpha) p(\xi)\dif \alpha 
= 
\int_{\reals^d} \Delta^m F(\alpha)  p(\alpha)\dif \alpha 
= 0,$$
by applying Green's identity
\begin{eqnarray*} 
\int_{B(0,R)}\Delta^m F(\alpha) \, p(\alpha)\dif \alpha 
& = & 
\int_{B(0,R)} F(\alpha)  \Delta^mp(\alpha)\dif \alpha \\
&\mbox{}-&
\sum_{j=0}^{2m-1}(-1)^j
 \int_{\partial B(0,R)} 
    \lambda_{j+1} F (\alpha) \lambda_{2m-j} p(\alpha)
 \dif \sigma(\alpha)
\end{eqnarray*}
for a ball $B(0,R)$ of appropriate radius
(the boundary operators $\lambda_j$ are 
$\lambda_j  := \Delta^{(j-1)/2} $ or   
$\lambda_j  := D_n \Delta^{(j-2)/2} $ 
for odd and even $j$, respectively). 
Thus, $s_{f,\Xi}\in S_{\Xi}$ and the corollary follows from Lebesgue's lemma.
\end{proof}
\bibliographystyle{amsplain}
\bibliography{LL_test}
\end{document}